\theoremstyle{plain}
\newtheorem{thm}{Theorem}[section]
\newtheorem{cor}[thm]{Corollary}
\newtheorem{prop}[thm]{Proposition}
\newtheorem{lemma}[thm]{Lemma}
\newtheorem{sch}[thm]{Scholium}
\theoremstyle{definition}
\DeclareMathOperator{\GL}{GL}
\DeclareMathOperator{\LCM}{LCM}\DeclareMathOperator{\lcm}{lcm}
\DeclareMathOperator{\D}{D}\DeclareMathOperator{\G}{G}
\DeclareMathOperator{\DM}{F}\DeclareMathOperator{\word}{w}
\DeclareMathOperator{\sub}{s}
\newcommand{\vp}{\varphi}
\newcommand{\nid}{\noindent}
\newcommand{\iny}{\infty}
\newcommand{\es}{\emptyset}
\newcommand{\co}{\ensuremath{\colon}}
\newcommand{\innp}[1]{\left< #1 \right>}
\newcommand{\abs}[1]{\left\vert#1\right\vert}
\newcommand{\set}[1]{\left\{#1\right\}}
\newcommand{\norm}[1]{\left\vert \left\vert #1\right\vert\right\vert}
\newcommand{\pr}[1]{\left( #1 \right) }
\newcommand{\su}{\subset}
\newcommand{\ba}{\bigcap}
\newcommand{\lra}{\longrightarrow}
\newcommand{\B}[1]{\ensuremath{\mathbf{#1}}}
\newcommand{\N}{\ensuremath{\B{N}}}
\newcommand{\R}{\ensuremath{\B{R}}}
\newcommand{\Z}{\ensuremath{\B{Z}}}
\begin{document}
\bibliographystyle{plain}


\title{\textbf{Asymptotic growth and \\least common multiples in groups}}
\author{K. Bou-Rabee\thanks{University of Chicago, Chicago, IL 60637. E-mail: \tt{khalid@math.uchicago.edu}}~ and D. B. McReynolds\thanks{University of Chicago, Chicago, IL 60637. E-mail: \tt{dmcreyn@math.uchicago.edu}}}
\maketitle


\begin{abstract}
\nid In this article we relate word and subgroup growth to certain functions that arise in the quantification of residual finiteness. One consequence of this endeavor is a pair of results that equate the nilpotency of a finitely generated group with the asymptotic behavior of these functions. The second half of this article investigates the asymptotic behavior of two of these functions. Our main result in this arena resolves a question of Bogopolski from the Kourovka notebook concerning lower bounds of one of these functions for nonabelian free groups.
\end{abstract}

\smallskip\smallskip

\nid 1991 MSC classes: 20F32, 20E26
\vskip.05in
\nid keywords: \emph{free groups, hyperbolic groups, residual finiteness, subgroup growth, word growth.}

\section{Introduction}

\nid The goals of the present article are to examine the interplay between word and subgroup growth, and to quantify residual finiteness, a topic motivated and described by the first author in \cite{Bou}. These two goals have an intimate relationship that will be illustrated throughout this article. \smallskip\smallskip

\nid Our focus begins with the interplay between word and subgroup growth. Recall that for a fixed finite generating set $X$ of $\Gamma$ with associated word metric $\norm{\cdot}_X$, word growth investigates the asymptotic behavior of the function
\[ \word_{\Gamma,X}(n) = \abs{\set{\gamma \in \Gamma~:~ \norm{\gamma}_X \leq n}}, \]
while subgroup growth investigates the asymptotic behavior of the function
\[ \sub_\Gamma(n) = \abs{\set{\Delta \lhd \Gamma~:~ [\Gamma:\Delta]\leq n}}. \]
To study the interaction between word and subgroup growth we propose the first of a pair of questions: \smallskip\smallskip

\nid \textbf{Question 1.} \emph{What is the smallest integer $\DM_{\Gamma,X}(n)$ such that for every word $\gamma$ in $\Gamma$ of word length at most $n$, there exists a finite index normal subgroup of index at most $\DM_{\Gamma,X}(n)$ that fails to contain $\gamma$?} \smallskip\smallskip

\nid To see that the asymptotic behavior of $\DM_{\Gamma,X}(n)$ measures the interplay between word and subgroup growth, we note the following inequality (see Section \ref{Preliminary} for a simple proof):
\begin{equation}\label{BasicInequality}
\log (\word_{\Gamma,X}(n)) \leq \sub_\Gamma(\DM_{\Gamma,X}(2n))\log (\DM_{\Gamma,X}(2n)).
\end{equation}
Our first result, which relies on Inequality (\ref{BasicInequality}), is the following.

\begin{thm}\label{DivisibilityLogGrowth}
 If $\Gamma$ is a finitely generated linear group, then the following are equivalent:
\begin{itemize}
\item[(a)]
$\DM_{\Gamma,X}(n) \leq (\log(n))^r$ for some $r$.
\item[(b)]
$\Gamma$ is virtually nilpotent.
\end{itemize}
\end{thm}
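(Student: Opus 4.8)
The plan is to prove the two implications separately; the linearity hypothesis enters only in $(a)\Rightarrow(b)$, since finitely generated virtually nilpotent groups are automatically linear (they embed in $\GL_d(\Z)$).

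\emph{Proof of $(b)\Rightarrow(a)$.} First reduce to the torsion-free case: a finitely generated virtually nilpotent $\Gamma$ contains a finite-index nilpotent subgroup $N$, and since the torsion subgroup $T(N)$ is finite and every torsion element of $N$ lies in it, residual finiteness of $N$ yields a finite-index $M\le N$ with $M\cap T(N)=1$, hence $M$ torsion-free (and finitely generated nilpotent). By the classical embedding theorem for such groups, $M\hookrightarrow U_d(\Z)$, the upper unitriangular integer matrices, for some $d$. Expanding a word of length $\le n$ in a fixed generating set and using that a product of $d$ strictly upper triangular $d\times d$ matrices is zero, one sees every matrix entry of such an element is bounded by a fixed polynomial $p(n)$; a nontrivial element therefore has an off-diagonal entry $a$ with $0<|a|\le p(n)$, so $a$ has $O(\log n)$ prime divisors and there is a prime $q$ of size $(\log n)^{1+o(1)}$ with $q\nmid a$. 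Reduction modulo $q$ maps $M$ onto a subgroup of $U_d(\mathbb{F}_q)$, separates the element, and has index $q^{\binom d2}=(\log n)^{O(1)}$; thus $\DM_{M}(n)\le(\log n)^{r'}$ for a suitable $r'$ (an estimate of this kind for finitely generated nilpotent groups also appears in \cite{Bou}). Finally, $\DM$ behaves well under passing to finite-index subgroups in the direction we need: if $\gamma\ne 1$, $\|\gamma\|_X\le n$, and $\gamma$ lies outside $\mathrm{core}_\Gamma(M)$, it is separated by the fixed finite quotient $\Gamma/\mathrm{core}_\Gamma(M)$; otherwise $\gamma\in M$, its word length in $M$ is $O(n)$, one picks $\Delta\lhd M$ of index $\le\DM_M(O(n))$ missing $\gamma$, and replaces $\Delta$ by the intersection of its $[\Gamma:M]$ distinct $\Gamma$-conjugates — a normal subgroup of $\Gamma$ still missing $\gamma$ and of index $(\log n)^{O(1)}$. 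Hence $\DM_{\Gamma,X}(n)\le(\log n)^r$ for some $r$.

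\emph{Proof of $(a)\Rightarrow(b)$.} This is where Inequality~(\ref{BasicInequality}) does the work. Assume $\DM_{\Gamma,X}(n)\le(\log n)^r$ and put $m=m(n):=\DM_{\Gamma,X}(2n)=O((\log n)^r)$, so $\log m=O(\log\log n)$. Writing $d$ for the number of generators of $\Gamma$, every normal subgroup of index $\le m$ is a kernel of a surjection of $\Gamma$ onto a group of order $\le m$; since the number of groups of order $\le m$ is $m^{O((\log m)^2)}$ (the Higman--Sims/Pyber enumeration) and there are at most $m^d$ homomorphisms onto each, $\sub_\Gamma(m)\le m^{O((\log m)^2)}=\exp\!\big(O((\log\log n)^3)\big)$. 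Feeding this into~(\ref{BasicInequality}),
\[ \log\word_{\Gamma,X}(n)\ \le\ \sub_\Gamma(m)\log m\ \le\ \exp\!\big(O((\log\log n)^3)\big)=o(n), \]
so $\Gamma$ has subexponential word growth. Now use linearity: by the Tits alternative $\Gamma$ contains no nonabelian free subgroup (one would force exponential growth of $\Gamma$), hence $\Gamma$ is virtually solvable; and a finitely generated virtually solvable group of subexponential growth is virtually nilpotent by the Milnor--Wolf theorem. This proves $(b)$.

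The crux is the use of~(\ref{BasicInequality}) in $(a)\Rightarrow(b)$: the naive bound $\sub_\Gamma(m)\le m^{O(m)}$ on the number of index-$\le m$ normal subgroups of a $d$-generated group is useless here, for with $m$ only polylogarithmic in $n$ it would bound $\word_{\Gamma,X}(n)$ by a super-exponential function and give no contradiction with free-like growth; one genuinely needs the quasi-polynomial enumeration of finite groups of a given order, so that $\sub_\Gamma(m)$ stays $\exp(\mathrm{polylog}(m))$. The remaining ingredients — the polynomial bound on entries of short words in $U_d(\Z)$ (which is exactly where torsion-free nilpotency, as opposed to mere linearity, is essential) and the one-directional stability of $\DM$ under finite-index subgroups — are routine but must be carried out with some care.
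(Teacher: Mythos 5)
Your proof is correct and follows essentially the same route as the paper's: the $(a)\Rightarrow(b)$ direction feeds the bound $\DM_{\Gamma,X}(2n)=O((\log n)^r)$ into Inequality~(\ref{BasicInequality}) together with the quasi-polynomial bound on normal subgroup growth (the paper cites $\log\sub_\Gamma(n)\preceq(\log n)^2$ from Lubotzky--Segal, which you re-derive, somewhat more coarsely but adequately, from the Pyber/Sims enumeration of finite groups) to conclude subexponential word growth, then applies Tits' alternative and Milnor--Wolf exactly as the paper does; the $(b)\Rightarrow(a)$ direction is the paper's argument with Theorem~0.2 of \cite{Bou} (the $U_d(\Z)$ embedding, polynomial entry bounds, and reduction modulo a small prime) and Lemma~\ref{DivisibilityAsymptoticLemma}(c) (your intersection-of-conjugates step) unfolded rather than cited.
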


\nid For finitely generated linear groups that is not virtually nilpotent, Theorem \ref{DivisibilityLogGrowth} implies $\DM_{\Gamma,X}(n) \nleq (\log(n))^r$ for any $r >0$. For this class of groups, we can improve this lower bound. Precisely, we have the following result---see Section \ref{Preliminary} for the definition of $\preceq$.

\begin{thm}\label{basiclowerbound}
Let $\Gamma$ be a group that contains a nonabelian free group of rank $m$. Then
\[ n^{1/3} \preceq \DM_{\Gamma,X}(n).\]
\end{thm}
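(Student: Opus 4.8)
The plan is to reduce to a nonabelian free group and there to exhibit, for each large $k$, a short word which cannot be detected in any finite group of order $\leq k$.

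\emph{Reduction.} Since $\Ga$ contains $F_m$, hence a copy of $F_2$, and since any normal $N\lhd\Ga$ of index $\leq D$ meets that $F_2$ in a normal subgroup $N\cap F_2$ of index $\leq D$ that still omits any given element of $F_2\setminus N$, I get $\DM_{F_2,Y}\preceq\DM_{\Ga,X}$, where $Y$ is a free basis of that $F_2$ and I use that $\|\cdot\|_Y\asymp\|\cdot\|_X$ on $F_2$ and that $\DM$ is independent of the generating set up to $\asymp$. So it suffices to prove $n^{1/3}\preceq\DM_{F_2,X}$. For $w\in F_2$ write $\mathrm{rf}(w)$ for the least order of a finite group in which $w$ has nontrivial image: a normal subgroup of $F_2$ of index $\leq D$ omitting $w$ is the same as a quotient of order $\leq D$ in which $w$ survives, and restricting to the subgroup generated by the images of the two generators gives $\mathrm{rf}(w)=\min\{\,|Q|:w\text{ is not a law of }Q\,\}$. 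Hence $\DM_{F_2,X}(n)=\max\{\mathrm{rf}(w):\|w\|_X\leq n\}$, and $\mathrm{rf}(w)>k$ exactly when $w$ is a law of every finite group of order $\leq k$. I therefore want a nontrivial $w_k\in F_2$ with $\|w_k\|_X=O(k^3)$ that is a law of every finite group of order $\leq k$; then $\DM_{F_2,X}(O(k^3))\geq k$, and monotonicity of $\DM$ in $n$ yields $n^{1/3}\preceq\DM_{F_2,X}(n)$.

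\emph{Construction of $w_k$.} Put $c_i=b^{-i}ab^i$; the $c_i$ belong to a free basis of the free subgroup $\ker(a\mapsto 0,\ b\mapsto 1)$ of $F_2$, and $\|c_i\|_X\leq 2i+1$. I will take $w_k=u_k(c_1,c_2,\dots)$, where $u_k$ is a nontrivial word in $O(k)$ variables, of length $O(k^2)$, that is a law of every finite group of order $\leq k$. Granting such a $u_k$: $w_k\neq 1$ since the $c_i$ are free generators (a nontrivial word in free generators is nontrivial); $w_k$ is a law of every group of order $\leq k$ since laws survive all substitutions; and $\|w_k\|_X\leq \mathrm{length}(u_k)\cdot\max_i\|c_i\|_X=O(k^2)\cdot O(k)=O(k^3)$, as needed.

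\emph{The word $u_k$ --- where the difficulty lies.} The solvable part is routine: the iterated commutator word $\delta_d$ on $2^d$ variables ($\delta_1=[y_1,y_2]$, and $\delta_{j+1}=[\delta_j,\delta_j]$ on disjoint variable blocks) has length $4^d$ and lies in $G^{(d)}$ under any substitution into a group $G$, so it is a law of every group of derived length $\leq d$; with $d=\lceil\log_2 k\rceil$ it has length $O(k^2)$, uses $\leq 2k$ variables, and is a law of every \emph{solvable} group of order $\leq k$ (such a group has derived length at most $\log_2 k$). The hard part is to promote $\delta_d$ to a word $u_k$ that is moreover a law of every \emph{non}solvable group of order $\leq k$ --- equivalently, of every finite simple group of order $\leq k$ --- without spoiling either the length bound $O(k^2)$ or nontriviality; I expect this to be the real content of the theorem. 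The natural strategy is to exploit a chief series: every group of order $\leq k$ has a chief series of length $\leq\log_2 k$ whose factors are elementary abelian or a power of a simple group of order $\leq k$, and one splices laws of these factors into the derived-word scaffold by iterated substitution on disjoint variable blocks --- nontriviality is retained because nontrivial words on disjoint generators freely generate a free subgroup. The delicate estimate, presumably exactly where the paper's least-common-multiple bounds enter, is to keep the length of the composite word $O(k^2)$: there are about $k^{1/3}$ simple groups of order $\leq k$, each carrying a law, but a naive composition of their laws and the single exponent law $x^{\mathrm{lcm}(1,\dots,k)}=1$ both have length far larger than $k^2$, so the combination must be arranged with care. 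Pushing this bound through, and then through the substitution $y_i\mapsto c_i$, is the remaining work.
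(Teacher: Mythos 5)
Your reduction to $F_2$ and your reformulation of $\DM_{F_2,X}$ in terms of laws of finite groups are both correct, and the goal you identify---a nontrivial word $w_k\in F_2$ of length $O(k^3)$ that is a law of every finite group of order $\leq k$---is exactly what the paper produces. But your construction is incomplete precisely where you flag the difficulty, and you have misjudged where the real content lies: the paper never touches chief series, simple groups, or any classification of small groups. The idea you are missing is to build the iterated commutator not from the free system $c_i=b^{-i}ab^i$ but directly from the powers $x,x^2,\dots,x^n$ of a single generator $x$, suitably conjugated at each level by a letter of $X$ to prevent the commutators from degenerating (this conjugation step is essential, since powers of $x$ commute). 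Call the resulting word $\delta_n$. Because each bracket $[\ga,\eta]$ lies in the normal closure of each of its arguments, an easy induction shows $\delta_n\in\bigcap_{i\leq n}\overline{\innp{x^i}}$. Now if $\vp\co F_2\to Q$ is any homomorphism with $|Q|\leq n$, the image $\vp(x)$ has order $d\leq n$ by Lagrange, so $\vp(x^d)=1$ and hence $\vp(\delta_n)=1$ because $\delta_n$ is a product of conjugates of $x^{\pm d}$. That is, $\delta_n$ is automatically a law of \emph{every} group of order $\leq n$, solvable or not, with no case analysis. The length count is the same as yours: $\log_2 n$ commutator levels, base lengths $\leq n$, giving $\|\delta_n\|_X\leq 6n^3$ (the paper's Proposition~\ref{FreeCandidateLemma}). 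So your scaffold is right, but the substitution $y_i\mapsto c_i$ throws away the one feature---that some $x^i$ dies in every small quotient---that makes the bound come for free; the attempt to compensate by constructing a short universal law for all groups of order $\leq k$ from their composition structure is a genuinely harder problem and is not what the theorem requires.
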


\nid The motivation for the proof of Theorem \ref{basiclowerbound} comes from the study of $\DM_{\Z,X}(n)$, where the Prime Number Theorem and least common multiples provide lower and upper bounds for $\DM_{\Z,X}(n)$. In Section \ref{FreeGroupGrowth}, we extend this approach by generalizing least common multiples to finitely generated groups (a similar approach was also taken in the article of Hadad \cite{Hadad}). Indeed with this analogy, Theorem \ref{basiclowerbound} and the upper bound of $n^3$ established in \cite{Bou}, \cite{Rivin} can be viewed as a weak Prime Number Theorem for free groups since the Prime Number Theorem yields $\DM_\Z(n) \simeq \log(n)$. Recently, Kassabov--Matucci \cite{KM} improved the lower bound of $n^{1/3}$ to $n^{2/3}$. A reasonable guess is that $\DM_{F_m,X}(n) \simeq n$, though presently neither the upper or lower bound is known. We refer the reader to \cite{KM} for additional questions and conjectures.\smallskip\smallskip

\nid There are other natural ways to measure the interplay between word and subgroup growth. Let $B_{\Gamma,X}(n)$ denote $n$--ball in $\Gamma$ for the word metric associated to the generating set $X$. Our second measurement is motivated by the following question---in the statement, $B_{\Gamma,X}(n)$ is the metric $n$--ball with respect to the word metric $\norm{\cdot}_X$:\smallskip\smallskip

\nid \textbf{Question 2.} \emph{What is the cardinality $\G_{\Gamma,X}(n)$ of the smallest finite group $Q$ such that there exists a surjective homomorphism $\vp\co \Gamma \to Q$ with the property that $\vp$ restricted to $B_{\Gamma,X}(n)$ is injective?}\smallskip\smallskip

\nid We call $\G_{\Gamma,X}(n)$ the \emph{residual girth function} and relate $\G_{\Gamma,X}(n)$ to $\DM_{\Gamma,X}$ and $\word_{\Gamma,X}(n)$ for a class of groups containing non-elementary hyperbolic groups; Hadad \cite{Hadad} studied group laws on finite groups of Lie type, a problem that is related to residual girth and the girth of a Cayley graph for a finite group. Specifically, we obtain the following inequality (see Section \ref{FreeGroupGrowth} for a precise description of the class of groups for which this inequality holds):
\begin{equation}\label{BasicGirthEquation}
\G_{\Gamma,X}(n/2) \leq  \DM_{\Gamma,X}\pr{6n(\word_{\Gamma,X}(n))^{2}}.
\end{equation}
Our next result shows that residual girth functions enjoy the same growth dichotomy as word and subgroup growth---see \cite{gromov} and \cite{lubsegal-2003}.

\begin{thm}\label{GirthPolynomialGrowth}
If $\Gamma$ is a finitely generated group then the following are equivalent.
\begin{itemize}
\item[(a)]
$\G_{\Gamma,X}(n) \leq n^r$ for some $r$.
\item[(b)]
$\Gamma$ is virtually nilpotent.
\end{itemize}
\end{thm}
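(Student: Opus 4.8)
The plan is to prove the two implications separately. The implication \textbf{(a) $\Rightarrow$ (b)} is the soft direction: it suffices to observe that the residual girth function dominates word growth. If $\vp\co\Gamma\to Q$ is surjective and injective on $B_{\Gamma,X}(n)$ with $\abs{Q}=\G_{\Gamma,X}(n)$, then $\word_{\Gamma,X}(n)=\abs{B_{\Gamma,X}(n)}=\abs{\vp(B_{\Gamma,X}(n))}\le\abs{Q}=\G_{\Gamma,X}(n)$. So assumption (a) forces $\word_{\Gamma,X}(n)\le n^{r}$, i.e. $\Gamma$ has polynomial word growth, and Gromov's polynomial growth theorem yields that $\Gamma$ is virtually nilpotent.

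For \textbf{(b) $\Rightarrow$ (a)} the idea is to produce, for each $m$, an explicit finite quotient $\Gamma/K_m$ of size polynomial in $m$ whose kernel separates the ball of radius a fixed positive power of $m$, using the Mal'cev embedding and congruence subgroups. First reduce to the torsion-free case: a finitely generated virtually nilpotent group $\Gamma$ contains a finite-index subgroup $N$ that is normal in $\Gamma$ and torsion-free nilpotent, with $[\Gamma:N]=q$; fix a Mal'cev embedding $N\hookrightarrow U_{d}(\Z)$ into $d\times d$ upper unitriangular integer matrices. Two elementary distortion estimates are needed. Since $N$ has finite index in $\Gamma$, its word metric (for any finite generating set) is bi-Lipschitz to the restriction of $\norm{\cdot}_{X}$, so $B_{\Gamma,X}(n)\cap N\subseteq B_{N}(Cn)$ for a constant $C$. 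And a product of at most $Cn$ matrices taken from the fixed finite set of images of the generators of $N$ and their inverses has all entries bounded by a polynomial $P(n)$ (of degree $d-1$), since products of unitriangular matrices grow only polynomially in the number of factors. Hence every element of $B_{\Gamma,X}(n)\cap N$, viewed in $U_{d}(\Z)$, has entries of absolute value at most $P(n)$.

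Now let $\Gamma(m)=\ker\pr{U_{d}(\Z)\to U_{d}(\Z/m\Z)}$ be the principal congruence subgroup and put $H_{m}=N\cap\Gamma(m)$, of index at most $\abs{U_{d}(\Z/m\Z)}=m^{d(d-1)/2}$ in $N$. The crucial point is that $H_{m}$ is \emph{normal in $N$}, because $\Gamma(m)$ is normal in $U_{d}(\Z)$; therefore $H_{m}$ has at most $q$ distinct $\Gamma$-conjugates, and its normal core $K_{m}=\bigcap_{g\in\Gamma}gH_{m}g^{-1}$ satisfies $[\Gamma:K_{m}]\le[\Gamma:H_{m}]^{q}\le\pr{q\,m^{d(d-1)/2}}^{q}$, which is polynomial in $m$ because $q$ and $d$ are fixed. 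Choose $m=P(2n)+1$. Any $x\in K_{m}$ of the form $ab^{-1}$ with $a,b\in B_{\Gamma,X}(n)$ lies in $N\cap B_{\Gamma,X}(2n)$, hence as a matrix in $U_{d}(\Z)$ has all entries of absolute value at most $P(2n)$, and it also lies in $\Gamma(m)$, so $x-I$ has all entries divisible by $m>P(2n)$; therefore $x=1$. Thus $\Gamma\to\Gamma/K_{m}$ is injective on $B_{\Gamma,X}(n)$, giving $\G_{\Gamma,X}(n)\le[\Gamma:K_{m}]\le n^{r}$ for a suitable $r$, which is (a).

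The main obstacle is the passage from a separating quotient of $N$ to one of $\Gamma$ without spoiling the polynomial bound: the normal core of an arbitrary finite-index subgroup of index $k$ can have index up to $k!$, which here would be exponential in $n$. This is exactly why one works with $H_{m}=N\cap\Gamma(m)$, which is \emph{already} normal in $N$, so coring it up to $\Gamma$ costs only the fixed exponent $q=[\Gamma:N]$. A secondary, routine point is the polynomial distortion of the Mal'cev embedding on metric balls; it can also be read off from the Bass--Guivarc'h description of the filtration of $N$ by its lower central series. (Note that Theorem~\ref{DivisibilityLogGrowth} and inequality~(\ref{BasicGirthEquation}) are not needed here, since the class of groups for which (\ref{BasicGirthEquation}) holds need not contain all virtually nilpotent groups.)
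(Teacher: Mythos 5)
Your proof is correct and takes essentially the same route as the paper: (a)$\Rightarrow$(b) via $\word_{\Gamma,X}\le\G_{\Gamma,X}$ and Gromov, and (b)$\Rightarrow$(a) via the Mal'cev embedding of a torsion-free finite-index subgroup into $U_d(\Z)$ followed by reduction modulo a polynomially large integer. The one place you depart is that the paper delegates the passage from the torsion-free subgroup up to $\Gamma$ to Lemma~\ref{GirthAsymptoticLemma}(c) (whose proof is omitted there), whereas you carry that step out explicitly via the normal-core bound $[\Gamma:K_m]\le[\Gamma:H_m]^q$ --- which is exactly the content of that lemma --- and your choice $m=P(2n)+1$ also repairs a small off-by-one in the paper, which reduces modulo $2Cn^k$ where $2Cn^k+1$ is what is actually needed to force the difference of two ball elements to vanish.
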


\nid The asymptotic growth of $\DM_{\Gamma,X}(n)$, $\G_{\Gamma,X}(n)$, and related functions arise in quantifying residual finiteness, a topic introduced in \cite{Bou} (see also the recent articles of the authors \cite{BM}, Hadad \cite{Hadad}, Kassabov--Mattucci \cite{KM}, and Rivin \cite{Rivin}). Quantifying residual finiteness amounts to the study of so-called divisibility functions. Given a finitely generated, residually finite group $\Gamma$, we define the \emph{divisibility function} $\D_\Gamma\co \Gamma^\bullet \lra \N $ by
\[ \D_\Gamma(\gamma) = \min \set{[\Gamma:\Delta] ~:~ \gamma \notin \Delta}. \]
The associated \emph{normal divisibility function} for normal, finite index subgroups is defined in an identical way and will be denoted by $\D_{\Gamma}^\lhd$. It is a simple matter to see that $\DM_{\Gamma,X}(n)$ is the maximum value of $\D_{\Gamma}^\lhd$ over all non-trivial elements in $B_{\Gamma,X}(n)$. We will denote the associated maximum of $\D_\Gamma$ over this set by $\max \D_\Gamma (n)$.\smallskip\smallskip

\nid The rest of the introduction is devoted to a question of Oleg Bogopolski, which concerns $\max \D_{\Gamma,X}(n)$. It was established in \cite{Bou} that $\log(n) \preceq \max \D_{\Gamma,X}(n)$ for any finitely generated group with an element of infinite order (this was also shown by \cite{Rivin}). For a nonabelian free group $F_m$ of rank $m$, Bogopolski asked whether $\max \D_{F_m,X}(n) \simeq \log(n)$ (see Problem 15.35 in the Kourovka notebook \cite{TheBook}). Our next result answers Bogopolski's question in the negative---we again refer the reader to Section \ref{Preliminary} for the definition of $\preceq$.

\begin{thm}\label{toughlowerbound}
If $m>1$, then $\max \D_{F_m,X}(n) \npreceq \log(n)$.
\end{thm}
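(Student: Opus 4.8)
The plan is to translate the divisibility function on $F_m$ into a statement about finite permutation representations, and from there into a statement about \emph{identities} (laws) of symmetric groups. Write $F_m=\langle x_1,\dots,x_m\rangle$ with $X=\{x_1,\dots,x_m\}$, and let $\mathrm{Sym}(d)$ denote the symmetric group on $d$ letters. The key reformulation is: for a nonempty reduced word $w$ in $x_1,x_2$,
\[
\D_{F_m}\pr{w(x_1,x_2)}>D \quad\Longleftrightarrow\quad w(\sigma,\tau)=1\ \text{for all}\ \sigma,\tau\in\mathrm{Sym}(D).
\]
Consequently $\max\D_{F_m,X}(n)\ge D+1$ whenever $\mathrm{Sym}(D)$ satisfies a nontrivial two-variable identity of word length at most $n$. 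So the theorem reduces to the purely group-theoretic assertion that, for every constant $C$, there is an integer $D$ for which $\mathrm{Sym}(D)$ has a nontrivial identity of length at most $e^{D/C}$; equivalently, that the length of the shortest nontrivial identity of $\mathrm{Sym}(D)$ is $e^{o(D)}$ along some sequence of $D$.

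The reformulation is the Stallings/Schreier dictionary. A subgroup $H\le F_m$ of index $d$ is the point stabilizer of a transitive action of $F_m$ on $d$ points, i.e.\ a homomorphism $\phi\co F_m\to\mathrm{Sym}(d)$ with a marked point, and $w(x_1,x_2)\notin H$ exactly when $w(\phi(x_1),\phi(x_2))$ moves that point. If $w$ is an identity of $\mathrm{Sym}(D)$, then for every $d\le D$ and every $\phi$ the images $\phi(x_1),\phi(x_2)$ lie in $\mathrm{Sym}(d)\le\mathrm{Sym}(D)$, so $w(\phi(x_1),\phi(x_2))=1$ and $w(x_1,x_2)\in H$; hence $\D_{F_m}(w(x_1,x_2))>D$. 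Conversely, if $w(\sigma,\tau)\ne 1$ for some $\sigma,\tau\in\mathrm{Sym}(D)$, restrict $\langle\sigma,\tau\rangle$ to an orbit on which $w(\sigma,\tau)$ is nontrivial; this orbit has size $\le D$ and produces an index-$\le D$ subgroup of $F_m$ missing $w(x_1,x_2)$. That $w(x_1,x_2)$ is a nontrivial element of $\norm{\cdot}_X$-length equal to the length of $w$ is immediate since $F_m$ is free.

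It remains to build nontrivial identities of $\mathrm{Sym}(D)$ far shorter than exponential in $D$, and this is the heart of the argument. The naive identity $x^{\lcm(1,\dots,D)}$ has length $\lcm(1,\dots,D)=e^{(1+o(1))D}$ by the Prime Number Theorem; running it through the reformulation reproves only the previously known bound $\log n\preceq\max\D_{F_m,X}(n)$, in exact parallel with $\max\D_{\Z,X}(n)\simeq\log n$. To improve on this one exploits that $\lcm(1,\dots,k)=e^{\Theta(k)}$ is comparatively tiny when $k\ll D$: the partial exponentiation $\sigma\mapsto\sigma^{\lcm(1,\dots,k)}$ kills every cycle of length $\le k$, leaving a permutation with at most $D/k$ (long) cycles, and one handles the surviving long cycles recursively, using conjugation by the second variable and commutators to localize supports and split the domain. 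Executed with care this yields a nontrivial identity of $\mathrm{Sym}(D)$ of quasi-polynomial length $D^{O(\log D)}=e^{O((\log D)^2)}$, in particular $e^{o(D)}$. The main obstacle is exactly this construction: bounding the word length across the recursion while certifying that the resulting word vanishes on \emph{all} of $\mathrm{Sym}(D)$ (not merely on the subgroups produced en route) and is reduced and nonempty.

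Granting such a construction, fix $D$ large, let $w$ be a nontrivial identity of $\mathrm{Sym}(D)$ with $n:=\norm{w(x_1,x_2)}_X\le e^{O((\log D)^2)}$, and put $\gamma=w(x_1,x_2)\in B_{F_m,X}(n)\setminus\{1\}$. Then $\D_{F_m}(\gamma)>D$, while $\log n=O((\log D)^2)$ gives $D\ge e^{c\sqrt{\log n}}$ for some $c>0$. Hence $\max\D_{F_m,X}(n)\succeq e^{c\sqrt{\log n}}$, a function that outgrows $C\log(Cn)$ for every constant $C$ because $\sqrt{\log n}$ dominates $\log\log n$. Therefore $\max\D_{F_m,X}(n)\npreceq\log n$ for $m>1$, as claimed.
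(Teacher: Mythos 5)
Your reformulation is correct and it is a genuinely different route from the paper's. The equivalence
\[
\D_{F_m}\bigl(w(x_1,x_2)\bigr)>D \iff w(\sigma,\tau)=1 \text{ for all } \sigma,\tau\in\mathrm{Sym}(D)
\]
is exactly the Schreier/Stallings dictionary, and your verification of both directions (orbit--stabilizer, restriction to an orbit) is sound. Under this translation, the theorem is indeed equivalent to exhibiting nontrivial two-variable laws of $\mathrm{Sym}(D)$ of length $e^{o(D)}$. But the paper never produces such short laws. Its word $\delta_n\in\LCM_{F_2,X}(\{x,\dots,x^{\lcm(1,\dots,n)}\})$ has length roughly $\lcm(1,\dots,n)^3\approx e^{3n}$, which, viewed as a law of $\mathrm{Sym}(n)$, only recovers the already-known $\log n$ lower bound that you correctly flag as insufficient. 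The paper's actual work happens elsewhere: it \emph{assumes} $\max\D_{F_2}(N)\le C\log(CN)$, feeds $\delta_n$ into this hypothesis to produce a finite-index subgroup $\Delta_n$ with $[F_2:\Delta_n]\le Mn$, and then analyzes the $x$--cycle structure of the Stallings/covering graph of $\Delta_n$. Because $\delta_n$ lies in $\overline{\langle x^{\ell}\rangle}$ for every $\ell\le\lcm(1,\dots,n)$, some conjugate of each such power must escape $\Delta_n$; an inductive bookkeeping of $x$--cycles then shows that for every $i$ the graph has $i$ disjoint $x$--cycles of length $\ge n/8$ once $n$ is large, forcing $[F_2:\Delta_n]\ge in/8$ and contradicting the linear bound. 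No short law is ever built.

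The gap in your proposal is therefore precisely where you flag it. The step ``Executed with care this yields a nontrivial identity of $\mathrm{Sym}(D)$ of quasi-polynomial length $D^{O(\log D)}$'' is the entire content of the argument and is not carried out. The sketch --- partial exponentiation by $\lcm(1,\dots,k)$ to annihilate short cycles, then recursion via conjugates and commutators on the surviving long cycles --- is the right family of ideas, but it is far from a proof: one must control word length across the recursion, certify vanishing on \emph{all} pairs $(\sigma,\tau)\in\mathrm{Sym}(D)^2$ rather than on the special orbits produced along the way, and confirm the resulting word is reduced and nonempty in $F_2$. Producing nontrivial laws of $\mathrm{Sym}(D)$ of subexponential length is a substantive theorem in its own right; moreover the specific bound $D^{O(\log D)}=e^{O((\log D)^2)}$ you assert is considerably stronger than what the known constructions achieve (which are of the shape $\exp\bigl(O(\sqrt{D}\cdot\mathrm{polylog}\,D)\bigr)$). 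Any $e^{o(D)}$ bound would suffice for the theorem, but none is established here, so the proof is incomplete at its crux. In contrast, the paper's covering-space argument avoids the law-construction problem entirely, which is why it succeeds despite starting from the same lcm-type test words; the tradeoff is that it only yields $\npreceq\log n$ and cannot presently rule out $(\log n)^2$, whereas your approach, if the law-length estimate were supplied, would give a quantitatively stronger lower bound.
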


\nid We prove Theorem \ref{toughlowerbound} in Section \ref{toughlowerboundSection} using results from Section \ref{FreeGroupGrowth}. The first part of the proof of Theorem \ref{toughlowerbound} utilizes the material established for the derivation of Theorem \ref{basiclowerbound}. The second part of the proof of Theorem \ref{toughlowerbound} is topological in nature, and involves a careful study of finite covers of the figure eight. It is also worth noting that our proof only barely exceeds the proposed upper bound of $\log(n)$. In particular, at present we cannot rule out the upper bound $(\log(n))^2$. In addition, to our knowledge the current best upper bound is $n/2 + 2$, a result established recently by Buskin \cite{Bus}. In comparison to our other results, Theorem \ref{toughlowerbound} is the most difficult to prove and is also the most surprising. Consequently, the reader should view Theorem \ref{toughlowerbound} as our main result.\smallskip\smallskip

\paragraph{\textbf{Acknowledgements.}}

Foremost, we are extremely grateful to Benson Farb for his inspiration, comments, and guidance. We would like to thank Oleg Bogopolski, Emmanuel Breuillard, Jason Deblois, Jordan Ellenberg, Tsachik Gelander, Uzy Hadad, Fr\'{e}d\'{e}ric Haglund, Ilya Kapovich, Martin Kassabov, Larsen Louder, Justin Malestein, Francesco Matucci, and Igor Rivin for several useful conversations and their interest in this article. Finally, we extend thanks to Tom Church, Blair Davey, and Alex Wright for reading over earlier drafts of this paper. The second author was partially supported by an NSF postdoctoral fellowship.

\section{Divisibility and girth functions}\label{Preliminary}

\nid In this introductory section, we lay out some of the basic results we require in the sequel. For some of this material, we refer the reader to \cite[Section 1]{Bou}.

\paragraph{\textbf{Notation.}} Throughout, $\Gamma$ will denote a finitely generated group, $X$ a fixed finite generating set for $\Gamma$, and $\norm{\cdot}_X$ will denote the word metric. For $\gamma \in \Gamma$, $\innp{\gamma}$ will denote the cyclic subgroup generated by $\gamma$ and $\overline{\innp{\gamma}}$ the normal closure of $\innp{\gamma}$.
For any subset $S \subset \Gamma$ we set $S^\bullet = S-1$.

\paragraph{\textbf{1. Function comparison and basic facts}.} For a pair of functions $f_1,f_2\co \N \to \N$, by $f_1 \preceq f_2$, we mean that there exists a constant $C$ such that $f_1(n) \leq Cf_2(Cn)$ for all $n$. In the event that $f_1 \preceq f_2$ and $f_2 \preceq f_1$, we will write $f_1 \simeq f_2$.\smallskip\smallskip

\nid This notion of comparison is well suited to the functions studied in this paper. We summarize some of the basic results from \cite{Bou} for completeness.

\begin{lemma}\label{DivisibilityAsymptoticLemma}
Let $\Gamma$ be a finitely generated group.
\begin{itemize}
\item[(a)]
If $X,Y$ are finite generating sets for $\Gamma$ then $\DM_{\Gamma,X} \simeq \DM_{\Gamma,Y}$.
\item[(b)]
If $\Delta$ is a finitely generated subgroup of $\Gamma$ and $X,Y$ are finite generating sets for $\Gamma,\Delta$ respectively, then $\DM_{\Delta,Y} \preceq \DM_{\Gamma,X}$.
\item[(c)]
If $\Delta$ is a finite index subgroup of $\Gamma$ with $X,Y$ as in (b), then $\DM_{\Gamma,X} \preceq (\DM_{\Delta,Y})^{[\Gamma:\Delta]}$.
\end{itemize}
\end{lemma}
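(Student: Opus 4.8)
The plan is to derive all three parts from two elementary facts. Recall from the introduction that $\DM_{\Gamma,X}(n)$ equals the maximum of $\D_\Gamma^\lhd$ over $B_{\Gamma,X}(n)^\bullet$, and that $\DM_{\Gamma,X}$ is non-decreasing; so in each case it suffices to compare the relevant word-metric balls and then transport a separating finite-index normal subgroup between the two groups (or generating sets). The two facts are: (i) if $\Delta$ is a finitely generated subgroup of $\Gamma$ with finite generating set $Y$, then, setting $C_0 = \max_{y \in Y}\norm{y}_X$, one has $\norm{\delta}_X \leq C_0\norm{\delta}_Y$ for all $\delta \in \Delta$ (immediate from the triangle inequality); and (ii) if in addition $[\Gamma : \Delta] < \infty$, then there is a constant $C_1$ with $\norm{\delta}_Y \leq C_1\norm{\delta}_X + C_1$ for all $\delta \in \Delta$. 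Fact (ii) is the standard quasi-isometry between a finite-index subgroup and the ambient group: one fixes left coset representatives $1 = t_1,\dots,t_k$ of $\Delta$ in $\Gamma$, observes that for each $x \in X^{\pm 1}$ and each $i$ the product $t_ix$ equals $\delta_{i,x}t_{j}$ with $\delta_{i,x} \in \Delta$ of bounded $\norm{\cdot}_Y$-length, and rewrites a length-$n$ word over $X^{\pm 1}$ representing $\delta$ accordingly.

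Part (a) is then immediate: with $C = \max\set{\max_{x \in X}\norm{x}_Y,\ \max_{y \in Y}\norm{y}_X,\ 1}$ we have $B_{\Gamma,X}(n) \subseteq B_{\Gamma,Y}(Cn)$, so every nontrivial $\gamma \in B_{\Gamma,X}(n)$ satisfies $\D_\Gamma^\lhd(\gamma) \leq \DM_{\Gamma,Y}(Cn)$; taking the maximum gives $\DM_{\Gamma,X}(n) \leq C\,\DM_{\Gamma,Y}(Cn)$, and the reverse comparison is symmetric, so $\DM_{\Gamma,X} \simeq \DM_{\Gamma,Y}$. For part (b), fact (i) gives $B_{\Delta,Y}(n)^\bullet \subseteq B_{\Gamma,X}(C_0n)^\bullet$. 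Given $\delta$ in this set, pick $N \lhd \Gamma$ with $\delta \notin N$ and $[\Gamma : N] = \D_\Gamma^\lhd(\delta) \leq \DM_{\Gamma,X}(C_0n)$. Then $N \cap \Delta$ is normal in $\Delta$, does not contain $\delta$, and has $[\Delta : N \cap \Delta] \leq [\Gamma : N]$; hence $\D_\Delta^\lhd(\delta) \leq \DM_{\Gamma,X}(C_0n)$, and maximizing over $\delta$ yields $\DM_{\Delta,Y} \preceq \DM_{\Gamma,X}$.

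For part (c), put $k = [\Gamma : \Delta]$, fix coset representatives $1 = t_1,\dots,t_k$, and let $\Delta_0 = \bigcap_{i=1}^k t_i\Delta t_i^{-1}$ be the normal core of $\Delta$ in $\Gamma$, so $[\Gamma : \Delta_0] \leq k!$. Fix a nontrivial $\gamma \in B_{\Gamma,X}(n)$. If $\gamma \notin \Delta_0$, the permutation action of $\Gamma$ on $\Gamma/\Delta$ is a homomorphism $\Gamma \to S_k$ with kernel $\Delta_0$ sending $\gamma$ to a nontrivial element, so $\D_\Gamma^\lhd(\gamma) \leq [\Gamma : \Delta_0] \leq k!$. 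If instead $\gamma \in \Delta_0 \subseteq \Delta$, then fact (ii) gives $\norm{\gamma}_Y \leq 2C_1n$ (for $n \geq 1$), so there is $N \lhd \Delta$ with $\gamma \notin N$ and $[\Delta : N] \leq \DM_{\Delta,Y}(2C_1n)$. Let $M = \bigcap_{g \in \Gamma}gNg^{-1}$ be the normal core of $N$ in $\Gamma$. Writing $g = t_jd$ with $d \in \Delta$ and using $N \lhd \Delta$ one gets $gNg^{-1} = t_jNt_j^{-1}$, hence $M = \bigcap_{i=1}^k t_iNt_i^{-1} \lhd \Gamma$, and $M \subseteq N$ gives $\gamma \notin M$. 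To bound $[\Gamma : M]$, one repeatedly uses that $[A : A \cap B] \leq [G : B]$ for subgroups $A, B \leq G$: this yields $[\Delta : \Delta \cap t_i\Delta t_i^{-1}] \leq k$, then $[N : N \cap t_iNt_i^{-1}] \leq k[\Delta : N]$, then $[N : M] \leq (k[\Delta : N])^{k-1}$, and finally $[\Gamma : M] = k[\Delta : N][N : M] \leq k^k[\Delta : N]^k$. Combining the two cases, $\D_\Gamma^\lhd(\gamma) \leq \max\set{k!,\ k^k(\DM_{\Delta,Y}(2C_1n))^k}$; maximizing over $\gamma \in B_{\Gamma,X}(n)^\bullet$ and folding $k!$, $k^k$, $2C_1$ into one constant $C$ gives $\DM_{\Gamma,X}(n) \leq C(\DM_{\Delta,Y}(Cn))^k$, i.e. $\DM_{\Gamma,X} \preceq (\DM_{\Delta,Y})^{[\Gamma:\Delta]}$.

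The routine content is in parts (a) and (b); the step needing genuine care is (c). The two things to get right there are fact (ii) — one must pass from $\norm{\gamma}_X \leq n$ to a \emph{linear} bound on $\norm{\gamma}_Y$, which uses finiteness of the index — and the index bookkeeping for the normal core $M$. Regarding the latter, the reason for coring $N$ over the $k$ conjugates $t_iNt_i^{-1}$, rather than simply replacing $\Delta$ by $\Delta_0$ at the outset, is precisely to keep the exponent equal to $[\Gamma:\Delta]$ instead of the larger $[\Gamma:\Delta_0]$; a cruder argument would prove only $\DM_{\Gamma,X} \preceq (\DM_{\Delta,Y})^{[\Gamma:\Delta]!}$. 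One should also dispatch the degenerate case $\Delta = 1$ (where $\Gamma$ is finite and the assertion is trivial) and note that finitely many small values of $n$ are irrelevant to the comparisons, but neither point causes difficulty.
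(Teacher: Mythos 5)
The paper does not actually prove Lemma \ref{DivisibilityAsymptoticLemma}; it quotes it from the reference \cite{Bou} (``We summarize some of the basic results from \cite{Bou} for completeness''), so there is no in-paper argument to compare against line by line. Your proof is correct, and it is the natural argument one expects the cited source to use. Parts (a) and (b) are the routine ball-inclusion and $N \mapsto N \cap \Delta$ transports, and your bookkeeping is right: in (b), $N \cap \Delta \lhd \Delta$ because $N \lhd \Gamma$, misses $\delta$, and has $[\Delta : N \cap \Delta] \leq [\Gamma : N]$, giving $\DM_{\Delta,Y}(n) \leq \DM_{\Gamma,X}(C_0 n)$.

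The place where a careless argument would go wrong is (c), and you handle it correctly. The essential point you identify is that one must take the normal core of the separating $N \lhd \Delta$ over the $k = [\Gamma:\Delta]$ coset translates $t_i N t_i^{-1}$, rather than first passing to the normal core $\Delta_0$ of $\Delta$ and then applying the finite-index estimate inside $\Delta_0$; the latter would only prove $\DM_{\Gamma,X} \preceq (\DM_{\Delta_0,\cdot})^{[\Gamma:\Delta_0]}$, with the exponent inflated to as much as $k!$. Your index estimate $[\Gamma : M] \leq (k[\Delta:N])^k = k^k [\Delta:N]^k$ is correct (it also follows in one step from $[\Gamma : \bigcap_i t_i N t_i^{-1}] \leq \prod_i [\Gamma : t_i N t_i^{-1}] = [\Gamma:N]^k$, which is slightly shorter than your chain through $[N:M]$ but gives the same bound). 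The two-case split on whether $\gamma$ lies in $\Delta_0$ is exactly what is needed: the first case is absorbed into the additive constant via $k!$, and the second case uses fact (ii) (the quasi-isometry between $\Delta$ and $\Gamma$, which is where finiteness of the index is genuinely used) to convert $\norm{\gamma}_X \leq n$ into a linear bound on $\norm{\gamma}_Y$. The degenerate case $\Delta = 1$ and the monotonicity of $\DM$ under enlarging the ball are correctly dispatched. In short: correct, complete, and the argument one would expect; the care about the exponent $k$ versus $k!$ is the real content of (c), and you got it right.
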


\nid We also have a version of Lemma \ref{DivisibilityAsymptoticLemma} for residual girth functions.

\begin{lemma}\label{GirthAsymptoticLemma}
Let $\Gamma$ be a finitely generated group.
\begin{itemize}
\item[(a)]
If $X,Y$ are finite generating sets for $\Gamma$, then $\G_{\Gamma,X} \simeq \G_{\Gamma,Y}$.
\item[(b)]
If $\Delta$ is a finitely generated subgroup of $\Gamma$ and $X,Y$ are finite generating sets for $\Gamma,\Delta$ respectively, then $\G_{\Delta,Y} \preceq \G_{\Gamma,X}$.
\item[(c)]
If $\Delta$ is a finite index subgroup of $\Gamma$ with $X,Y$ as in (b), then $\G_{\Gamma,X} \preceq (\G_{\Delta,Y})^{[\Gamma:\Delta]}$.
\end{itemize}
\end{lemma}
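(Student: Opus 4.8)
The plan is to follow the template of Lemma~\ref{DivisibilityAsymptoticLemma}, extracting three standard facts about word metrics. First, any two finite generating sets of a group yield bi-Lipschitz equivalent word metrics, so the associated balls are nested up to a multiplicative constant. Second, if $Y$ is a finite subset of a subgroup $\Delta \leq \Gamma$ and $X$ generates $\Gamma$, then $\norm{\delta}_X \leq C\norm{\delta}_Y$ for every $\delta \in \Delta$ with $C = \max_{y \in Y^{\pm 1}} \norm{y}_X$, and hence $B_{\Delta,Y}(n) \subseteq B_{\Gamma,X}(Cn)$. Third, if in addition $[\Gamma:\Delta] < \infty$, then conversely $\norm{\delta}_Y \leq C'\norm{\delta}_X$ for all $\delta \in \Delta$, where $C'$ is the largest $Y$-length of a Schreier generator of $\Delta$ with respect to a fixed transversal; this is the usual Reidemeister--Schreier rewriting. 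I will also use that $\G_{\Gamma,X}$ is non-decreasing, and that a surjection $\vp\co\Gamma\to Q$ is injective on $B_{\Gamma,X}(n)$ as soon as $\ker\vp \cap B_{\Gamma,X}(2n) = \{1\}$ (if $a,b\in B_{\Gamma,X}(n)$ then $ab^{-1}\in B_{\Gamma,X}(2n)$, so $\vp(a)=\vp(b)$ forces $ab^{-1}=1$). Residual finiteness of $\Gamma$ is implicit throughout so that the functions are defined.

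Parts (a) and (b) are then immediate. For (a): pick a surjection $\vp\co\Gamma\to Q$ onto a finite group with $|Q| = \G_{\Gamma,X}(Cn)$ injective on $B_{\Gamma,X}(Cn)$; since $B_{\Gamma,Y}(n) \subseteq B_{\Gamma,X}(Cn)$, the same $\vp$ gives $\G_{\Gamma,Y}(n) \leq \G_{\Gamma,X}(Cn)$, and the reverse inequality follows by symmetry, so $\G_{\Gamma,X}\simeq\G_{\Gamma,Y}$. For (b): pick $\vp\co\Gamma\to Q$ with $|Q| = \G_{\Gamma,X}(Cn)$ injective on $B_{\Gamma,X}(Cn)$, restrict to $\Delta$ and corestrict onto $\vp(\Delta)$; this is a surjection $\Delta \to \vp(\Delta)$ with $|\vp(\Delta)| \leq |Q|$, and since $B_{\Delta,Y}(n) \subseteq B_{\Gamma,X}(Cn)$ it is injective on $B_{\Delta,Y}(n)$, whence $\G_{\Delta,Y}(n) \leq \G_{\Gamma,X}(Cn)$.

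Part (c) is the main point. Write $d = [\Gamma:\Delta]$ and fix left coset representatives $g_1,\dots,g_d$ of $\Delta$ in $\Gamma$. Given $n$, choose a surjection $\psi\co\Delta\to P$ onto a finite group with $|P| = \G_{\Delta,Y}(2C'n)$ that is injective on $B_{\Delta,Y}(2C'n)$, and set $K = \ker\psi$, so $[\Delta:K] = |P|$ and $K \cap B_{\Delta,Y}(2C'n) = \{1\}$. Let $K_\Gamma = \bigcap_{g\in\Gamma} gKg^{-1}$ be the normal core of $K$ in $\Gamma$; since $K \lhd \Delta$, the conjugate $gKg^{-1}$ depends only on the coset $g\Delta$, so $K_\Gamma = \bigcap_{i=1}^d g_i K g_i^{-1}$ and $\Gamma/K_\Gamma$ injects into $\prod_{i=1}^d \Gamma/(g_iKg_i^{-1})$, each factor of size $[\Gamma:K] = d|P|$; hence the quotient map $\vp\co\Gamma\to Q := \Gamma/K_\Gamma$ has $|Q| \leq (d|P|)^d$. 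Finally $\vp$ is injective on $B_{\Gamma,X}(n)$: if $\gamma\in\ker\vp = K_\Gamma \subseteq K \subseteq \Delta$ has $\norm{\gamma}_X \leq 2n$, then the third fact gives $\norm{\gamma}_Y \leq 2C'n$, so $\gamma \in K \cap B_{\Delta,Y}(2C'n) = \{1\}$; thus $\ker\vp \cap B_{\Gamma,X}(2n) = \{1\}$. Therefore $\G_{\Gamma,X}(n) \leq d^d(\G_{\Delta,Y}(2C'n))^d$, and taking the comparison constant to be $\max(d^d, 2C')$ (using monotonicity of $\G_{\Delta,Y}$) yields $\G_{\Gamma,X} \preceq (\G_{\Delta,Y})^{[\Gamma:\Delta]}$.

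The only genuine obstacle is in part (c): one must produce a quotient of $\Gamma$ itself (not merely of $\Delta$) of order polynomially bounded in $|P|$ with exponent exactly $[\Gamma:\Delta]$, which forces the passage to the normal core and the estimate $|Q|\le (d|P|)^d$; and one must transport injectivity of $\psi$ on a $Y$-ball to injectivity of $\vp$ on an $X$-ball, which is precisely where finiteness of the index enters, through the comparison $\norm{\cdot}_Y \leq C'\norm{\cdot}_X$ on $\Delta$. Everything else reduces to nesting metric balls.
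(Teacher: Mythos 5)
Your proof is correct, and since the paper omits the argument (citing its straightforwardness and analogy with Lemma~\ref{DivisibilityAsymptoticLemma}), yours is exactly the kind of argument the authors had in mind. All three parts check out: (a) and (b) are routine nestings of balls using bi-Lipschitz comparison of word metrics (the Lipschitz bound $\norm{\delta}_X \le C\norm{\delta}_Y$ in (b) is one-sided, as it should be without a finite-index hypothesis), and (c) is where the real content lies. Your use of the normal core $K_\Gamma = \bigcap_{i=1}^d g_i K g_i^{-1}$ with the set-theoretic injection $\Gamma/K_\Gamma \hookrightarrow \prod_i \Gamma/(g_iKg_i^{-1})$ gives precisely the exponent $d = [\Gamma:\Delta]$ in the statement; the injectivity transfer from a $Y$-ball in $\Delta$ to an $X$-ball in $\Gamma$ correctly exploits $K_\Gamma \subseteq K \subseteq \Delta$ together with the finite-index comparison $\norm{\cdot}_Y \le C'\norm{\cdot}_X$ on $\Delta$ (Reidemeister--Schreier), and the doubling trick $\ker\vp \cap B_{\Gamma,X}(2n)=\{1\} \Rightarrow \vp$ injective on $B_{\Gamma,X}(n)$ is applied cleanly. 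The final bookkeeping with the constant $\max(d^d, 2C')$ and monotonicity of $\G_{\Delta,Y}$ correctly lands on the stated $\preceq$ relation.
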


\nid As the proof of Lemma \ref{GirthAsymptoticLemma} is straightforward, we have opted to omit it for sake of brevity. As a consequence of Lemmas \ref{DivisibilityAsymptoticLemma} and \ref{GirthAsymptoticLemma}, we occasionally suppress the dependence of the generating set in our notation.
\paragraph{\textbf{2. The basic inequality.}}

\nid We now derive (\ref{BasicInequality}) from the introduction. For the reader's convenience, recall (\ref{BasicInequality}) is
\[ \log (\word_{\Gamma,X}(n)) \leq \sub_\Gamma(\DM_{\Gamma,X}(2n))\log (\DM_{\Gamma,X}(2n)).\]

\begin{proof}[Proof of (\ref{BasicInequality})]
We may assume that $\Gamma$ is residually finite as otherwise $\DM_\Gamma(n)$ is eventually infinite for sufficiently large $n$ and the inequality is trivial. By definition, for each word $\gamma \in B_{\Gamma,X}^\bullet(2n)$, there exists a finite index, normal subgroup $\Delta_\gamma$ in $\Gamma$ such that $\gamma \notin \Delta_\gamma$ and $[\Gamma:\Delta_\gamma] \leq \DM_{\Gamma,X}(2n)$. Setting $\Omega_{\DM_{\Gamma,X}(2n)}(\Gamma)$ to be the intersection of all finite index, normal subgroup of index at most $\DM_{\Gamma,X}(2n)$, we assert that $B_{\Gamma,X}(n)$ injects into quotient $\Gamma/\Omega_{\DM_{\Gamma,X}(2n)}(\Gamma)$. Indeed, if two elements $\gamma_1,\gamma_2 \in B_{\Gamma,X}(n)$ had the same image, the element $\gamma_1\gamma_2^{-1}$ would reside in $\Omega_{\DM_{\Gamma,X}(2n)}(\Gamma)$. However, by construction, every element of word length at most $2n$ has nontrivial image. In particular, we see that
\begin{align*}
\word_{\Gamma,X}(n) &= \abs{B_{\Gamma,X}(n)} \leq \abs{\Gamma/\Omega_{\DM_{\Gamma,X}(2n)}(\Gamma)} \\
& \leq \prod_{\scriptsize{\begin{matrix} \Delta \lhd \Gamma \\ [\Gamma:\Delta]\leq \DM_{\Gamma,X}(2n)\end{matrix}}} \abs{\Gamma/\Delta} \\
&\leq \prod_{\scriptsize{\begin{matrix} \Delta \lhd \Gamma \\ [\Gamma:\Delta]\leq \DM_{\Gamma,X}(2n)\end{matrix}}} \DM_{\Gamma,X}(2n) \\
&\leq (\DM_{\Gamma,X}(2n))^{\sub_\Gamma(\DM_{\Gamma,X}(2n))}.
\end{align*}
Taking the log of both sides, we obtain
\[ \log(\word_{\Gamma,X}(n)) \leq \sub_\Gamma(\DM_{\Gamma,X}(2n))\log(\DM_{\Gamma,X}(2n)). \]
\end{proof}

\nid In fact, the proof of (\ref{BasicInequality}) yields the following.

\begin{sch}
Let $\Gamma$ be a finitely generated, residually finite group. Then
\[ \log (\G_{\Gamma,X}(n)) \leq \sub_\Gamma(\DM_{\Gamma,X}(2n)) \log(\DM_{\Gamma,X}(2n)). \]
\end{sch}

\paragraph{\textbf{3. An application of (\ref{BasicInequality}).}}

\nid We now derive the following as an application of (\ref{BasicInequality}).

\begin{prop}\label{BasicInequalityMainProp}
Let $\Gamma$ be a finitely generated, residually finite group. If there exists $\alpha > 1$ such that $\alpha^n \preceq \word_{\Gamma,X}(n)$, then $\DM_{\Gamma,X}(n) \npreceq (\log n)^r$ for any $r \in \R$.
\end{prop}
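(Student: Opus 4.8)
The plan is to argue by contradiction using the basic inequality (\ref{BasicInequality}) together with a polynomial bound on subgroup growth. Suppose, for contradiction, that $\DM_{\Gamma,X}(n) \preceq (\log n)^r$ for some $r \in \R$. I would first record the two ingredients I need: (i) by hypothesis there is $\alpha > 1$ with $\alpha^n \preceq \word_{\Gamma,X}(n)$, so $\log(\word_{\Gamma,X}(n)) \succeq n$, i.e.\ $\log(\word_{\Gamma,X}(n)) \geq cn$ for some constant $c > 0$ and all large $n$; and (ii) any finitely generated group has at most polynomial normal subgroup growth — more precisely, if $\Gamma$ can be generated by $d$ elements then the number of subgroups (normal or not) of index at most $k$ is bounded by a fixed power of $k$, say $\sub_\Gamma(k) \leq k^{s}$ for a constant $s$ depending only on $d$. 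This is the standard elementary bound (a subgroup of index $k$ is determined by the images of the $d$ generators in a transitive action on $k$ points).

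Next I would feed these into (\ref{BasicInequality}). Writing $N = \DM_{\Gamma,X}(2n)$, the basic inequality gives
\[ \log(\word_{\Gamma,X}(n)) \leq \sub_\Gamma(N)\,\log N \leq N^{s}\log N. \]
Under the contradiction hypothesis, $N = \DM_{\Gamma,X}(2n) \leq C(\log(2Cn))^r \leq C'(\log n)^r$ for large $n$, so the right-hand side is at most $C''(\log n)^{rs}\log\log n$, which is bounded above by $(\log n)^{rs+1}$ for large $n$. Combining with (i), we obtain $cn \leq (\log n)^{rs+1}$ for all large $n$, which is absurd since a polynomial in $\log n$ grows slower than any positive power of $n$. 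This contradiction establishes the proposition.

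The one point requiring care is that all the comparisons in $\preceq$ carry hidden multiplicative constants and arguments rescaled by constants, so I would be explicit that (1) the hypothesis $\alpha^n \preceq \word_{\Gamma,X}(n)$ unwinds to a genuine lower bound $\log(\word_{\Gamma,X}(n)) \geq cn$ for large $n$ after absorbing constants, and (2) the assumed bound $\DM_{\Gamma,X}(n) \preceq (\log n)^r$ likewise unwinds to $\DM_{\Gamma,X}(2n) \leq C'(\log n)^r$ for large $n$; the shift from $n$ to $2n$ is harmless because $\log(2n) \leq 2\log n$ for $n \geq 2$. I do not expect any real obstacle here — the only mildly delicate step is invoking the polynomial subgroup-growth bound $\sub_\Gamma(k) \leq k^s$, which holds for every finitely generated group and is exactly what makes the right-hand side of (\ref{BasicInequality}) polylogarithmic in $n$ under the contradiction hypothesis, versus the linear-in-$n$ lower bound forced by exponential word growth.
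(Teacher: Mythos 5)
Your proof has a genuine gap in step (ii): the assertion that every finitely generated group satisfies a \emph{polynomial} bound $\sub_\Gamma(k) \leq k^s$ is false. The counting argument you sketch --- a subgroup of index $k$ determines a transitive action on $k$ points, determined by the images of $d$ generators --- gives a bound on the order of $(k!)^d$, which is super-exponential in $k$, not polynomial. And this is not an artifact of a crude count: for $F_d$ the number of (all) subgroups of index exactly $k$ really does grow like $k\cdot(k!)^{d-1}$, and even restricting to normal subgroups (which is what the paper's $\sub_\Gamma$ counts) the growth is super-polynomial, roughly $n^{c\log n}$, because a normal subgroup of index $n$ corresponds to a surjection onto one of the many finite groups of order at most $n$. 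So $\sub_\Gamma(N) \leq N^s$ is simply not available, and your chain $cn \leq N^s\log N \leq (\log n)^{rs+1}$ collapses at its first link.

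The good news is the argument is salvageable, and when you salvage it you arrive at the paper's proof. The correct universal bound --- Corollary 2.8 of Lubotzky--Segal, which the paper cites --- is $\log(\sub_\Gamma(n)) \preceq (\log n)^2$, i.e.\ $\sub_\Gamma(n)$ can be as large as $n^{c\log n}$. This is super-polynomial, so plugging it directly into (\ref{BasicInequality}) no longer gives something polylogarithmic in $n$ on the right-hand side; instead you get something like $\exp\bigl(O((\log N)^2)\bigr)\cdot\log N$ with $N \preceq (\log n)^r$, hence $\exp\bigl(O((\log\log n)^2)\bigr)$. To compare cleanly against the $cn$ lower bound you should take a logarithm of the whole inequality, which is exactly what the paper does: one obtains
\[ \log n \preceq (\log\log n)^2 + \log\log\log n, \]
which is the desired contradiction. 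So the structure of your argument is right and the hypothesis of exponential word growth is used exactly as you intend; the only change needed is to replace the false polynomial subgroup-growth bound with the correct quasi-polynomial one and take one extra log before comparing.
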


\begin{proof}
Assume on the contrary that there exists $r \in \R$ such that $\DM_{\Gamma,X} \preceq (\log(n))^r$. In terms of $\preceq$ notation, inequality (\ref{BasicInequality}) becomes:
\[ \log(\word_{\Gamma, X} (n)) \preceq s_\Gamma (\DM_{\Gamma, X}(n)) \log(\DM_{\Gamma, X}(n)). \]
Taking the log of both sides, we obtain
\[ \log\log(\word_{\Gamma, X} (n)) \preceq \log(\sub_\Gamma (\DM_{\Gamma, X}(n)))+  \log(\log(\DM_{\Gamma, X}(n))). \]
This inequality, in tandem with the assumptions
\begin{align*}
\alpha^n &\preceq \word_{\Gamma,X}(n), \\
\DM_{\Gamma,X}(n) &\preceq (\log(n))^r,
\end{align*}
and $\log(\sub_\Gamma(n)) \preceq (\log(n))^2$ (see \cite[Corollary 2.8]{lubsegal-2003}) gives
\[ \log(n) \preceq (\log\log(n))^2 + \log\log\log(n), \]
which is impossible.
\end{proof}

\nid With Proposition \ref{BasicInequalityMainProp}, we can now prove Theorem \ref{DivisibilityLogGrowth}.

\begin{proof}[Proof of Theorem \ref{DivisibilityLogGrowth}]
For the direct implication, we assume that $\Gamma$ is a finitely generated linear group with $\DM_\Gamma \preceq (\log n)^r$ for some $r$. According to the Tits' alternative, either $\Gamma$ is virtually solvable or $\Gamma$ contains a nonabelian free subgroup. In the latter case, $\Gamma$ visibly has exponential word growth and thus we derive a contradiction via Proposition \ref{BasicInequalityMainProp}. In the case $\Gamma$ is virtually solvable, $\Gamma$ must also have exponential word growth unless $\Gamma$ is virtually nilpotent (see \cite[Theorem VII.27]{harpe-2000}). This in tandem with Proposition \ref{BasicInequalityMainProp} implies $\Gamma$ is virtually nilpotent.\smallskip\smallskip

\nid For the reverse implication, let $\Gamma$ be a finitely generated, virtually nilpotent group with finite index, nilpotent subgroup $\Gamma_0$. According to Theorem 0.2 in \cite{Bou}, $\DM_{\Gamma_0} \preceq (\log n)^r$ for some $r$. Combining this with Lemma \ref{DivisibilityAsymptoticLemma} (c) yields $\DM_\Gamma \preceq (\log n)^{r[\Gamma:\Gamma_0]}$.
\end{proof}

\nid In the next two sections, we will prove Theorem \ref{basiclowerbound}. In particular, for finitely generated linear groups that are not virtually solvable, we obtain an even better lower bound for $\DM_{\Gamma,X}(n)$ than can be obtained using (\ref{BasicInequality}). Namely, $n^{1/3} \preceq \DM_{\Gamma,X}(n)$ for such groups. The class of non-nilpotent, virtually solvable groups splits into two classes depending on whether the rank of the group is finite or not. This is not the standard notion of rank but instead
\[ \textrm{rk}(\Gamma) = \max\set{ r(\Delta)~:~ \Delta \text{ is a finitely generated subgroup of } \Gamma}, \]
where
\[ r(\Delta) = \min\set{\abs{Y}~:~Y \text{ is a generating set for }\Delta}. \]
The class of virtually solvable groups with finite rank is known to have polynomial subgroup growth (see \cite[Chapter 5]{lubsegal-2003}) and thus have a polynomial upper bound on normal subgroup growth. Using this upper bound with (\ref{BasicInequality}) yields our next result.

\begin{cor}
If $\Gamma$ is virtually solvable, finite rank, and not nilpotent, then $n^{1/d} \preceq \DM_{\Gamma,X}(n)$ for some $d \in \N$.
\end{cor}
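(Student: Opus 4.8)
The plan is to run the same machine as Proposition \ref{BasicInequalityMainProp}, but quantitatively: feed the basic inequality (\ref{BasicInequality}) with a \emph{polynomial} upper bound for $\sub_\Gamma$, which is available because $\Gamma$ has finite rank, together with an \emph{exponential} lower bound for $\word_{\Gamma,X}$, which is available because $\Gamma$ is not virtually nilpotent, and then solve for $\DM_{\Gamma,X}$.

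First, exactly as in the proof of (\ref{BasicInequality}), we may assume $\Gamma$ is residually finite, since otherwise $\DM_{\Gamma,X}(n)$ is eventually infinite and there is nothing to prove. Because $\Gamma$ is finitely generated, virtually solvable, and of finite rank in the sense defined above, it has polynomial subgroup growth (\cite[Chapter 5]{lubsegal-2003}); in particular there are constants $a$ and $c$ with $\sub_\Gamma(n) \leq a n^{c}$ for all $n$, the count of finite index normal subgroups being dominated by the count of all finite index subgroups. On the other hand, since $\Gamma$ is virtually solvable but not virtually nilpotent, the Milnor--Wolf theorem (\cite[Theorem VII.27]{harpe-2000}) gives exponential word growth: there is $\alpha > 1$ with $\alpha^{n} \leq \word_{\Gamma,X}(n)$ for all $n$.

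Substituting both estimates into (\ref{BasicInequality}) and abbreviating $D = \DM_{\Gamma,X}(2n)$, we obtain
\[ n \log \alpha \;\leq\; \log(\word_{\Gamma,X}(n)) \;\leq\; \sub_\Gamma(D)\log(D) \;\leq\; a\, D^{c}\log(D). \]
Since $\Gamma$ is infinite and residually finite, $D \to \infty$ as $n \to \infty$, so $a\log(D) \leq D$ for all large $n$, whence $n\log\alpha \leq D^{\,c+1}$. Therefore $\DM_{\Gamma,X}(2n) = D \geq (n\log\alpha)^{1/(c+1)}$ for all sufficiently large $n$, and after the harmless reindexing built into the definition of $\preceq$ this reads $n^{1/d} \preceq \DM_{\Gamma,X}(n)$ with $d = c+1$.

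All the weight here is carried by the two imported inputs — polynomial subgroup growth for finite-rank virtually solvable groups and the Milnor--Wolf word-growth dichotomy — after which the derivation is the one-line computation above. The only point requiring care is that the hypothesis must genuinely supply exponential word growth; for a group in this class that is precisely the assertion that it is not virtually nilpotent, and it is in this sense that the word ``not nilpotent'' is to be read here, since a virtually nilpotent $\Gamma$ would have only polylogarithmic $\DM_{\Gamma,X}$ by the reverse implication of Theorem \ref{DivisibilityLogGrowth} and the stated conclusion would then fail.
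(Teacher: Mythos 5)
Your proof is correct and takes essentially the same route as the paper: feed the basic inequality (\ref{BasicInequality}) with the two external ingredients — polynomial subgroup growth for finite-rank virtually solvable groups (Lubotzky--Segal) and exponential word growth from the Milnor--Wolf dichotomy — and solve for $\DM_{\Gamma,X}$. The only presentational difference is that the paper argues by contradiction with the specific exponent $d = 2m$ (where $\sub_\Gamma(n) \preceq n^m$), deriving the impossible $n \preceq \sqrt{n}\log n$, whereas you solve directly and get the slightly sharper $d = c+1$; both are one-line computations once the inputs are in hand. Your closing remark that ``not nilpotent'' must be read as ``not virtually nilpotent'' (since otherwise the exponential word-growth hypothesis fails and, by Theorem \ref{DivisibilityLogGrowth}, the conclusion would be false) is a valid observation about the statement's wording that the paper leaves implicit.
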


\begin{proof}
For a non-nilpotent, virtually solvable group of finite rank, we have the inequalities:
\begin{align*}
\alpha^n &\preceq \word_{\Gamma,X}(n) \\
\sub_{\Gamma,X}(n) &\preceq n^m.
\end{align*}
Setting $d=2m$ and assuming $\DM_{\Gamma,X}(n) \preceq n^{1/d}$, inequality (\ref{BasicInequality}) yields the impossible inequality
\[ n \simeq \log(\alpha^n) \preceq \log(\word_{\Gamma,X}(n)) \preceq \sub_\Gamma(\DM_{\Gamma,X}(n))\log(\DM_{\Gamma,X}(n)) \preceq (n^{1/d})^m \log(n^{1/d}) \simeq \sqrt{n}\log(n). \]
\end{proof}

\nid Virtually solvable group $\Gamma$ with infinite $\textrm{rk}(\Gamma)$ cannot be handled in this way as there exist examples with $c^{n^{1/d}} \preceq \sub_{\Gamma,X}(n)$ with $c>1$ and $d \in \N$. 


\section{Least common multiples}

\nid
Let $\Gamma$ be a finitely generated group and $S \su \Gamma^\bullet$ a finite subset. Associated to $S$ is the subgroup $L_S$ given by
\[ L_S = \ba_{\gamma \in S} \overline{\innp{\gamma}}. \]
We define the \emph{least common multiple of $S$} to be the set
\[ \LCM_{\Gamma,X}(S) = \set{\delta \in L_S^\bullet~:~ \norm{\delta}_X \leq \norm{\eta}_X \text{ for all }\eta \in L_S^\bullet}. \]
That is, $\LCM_{\Gamma,X}(S)$ is the set of nontrivial words in $L_S$ of minimal length in a fixed generating set $X$ of $\Gamma$. Finally, we set
\[
\lcm_{\Gamma,X}(S) =
\begin{cases}  \norm{\delta}_X& \text{ if there exists }\delta \in \LCM_{\Gamma,X}(S), \\
0 & \text{ if }\LCM_{\Gamma,X}(S) = \emptyset.
\end{cases}
\]

\nid The following basic lemma shows the importance of least common multiples in the study of both $\DM_\Gamma$ and $\G_\Gamma$.

\begin{lemma}\label{WordLengthForLCM}
Let $S \su \Gamma^\bullet$ be a finite set and $\delta \in \Gamma^\bullet$ have the following property: For any homomorphism $\vp\co \Gamma \to Q$, if $\ker \vp \cap S \ne \es$, then $\delta \in \ker \vp$. Then $\lcm_{\Gamma,X}(S) \leq \norm{\delta}_X$.
\end{lemma}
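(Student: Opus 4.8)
The plan is to build a finite quotient in which the image of $\delta$ has word length (pulled back along a section) at most $\norm{\delta}_X$, and then invoke the hypothesis to conclude $\delta$ lies in some $\overline{\innp{\gamma}}$ for every $\gamma \in S$, hence in $L_S$, which forces $\lcm_{\Gamma,X}(S) \le \norm{\delta}_X$ by the very definition of $\lcm_{\Gamma,X}$. Unwinding the logic, what the hypothesis really says is: for \emph{every} normal subgroup $N \lhd \Gamma$, if $N$ meets $S$ nontrivially then $\delta \in N$. Indeed, given such an $N$, apply the hypothesis to the quotient map $\vp\co \Gamma \to \Gamma/N$, whose kernel is exactly $N$; since $N \cap S \ne \es$ we get $\delta \in \ker\vp = N$. (One should allow arbitrary, not necessarily finite, quotients $Q$ here — the statement is phrased for all homomorphisms $\vp$, so this is legitimate.)

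The key step is then to observe that $\overline{\innp{\gamma}}$ is itself a normal subgroup of $\Gamma$ that obviously meets $S$ (it contains $\gamma \in S$). So for each $\gamma \in S$, taking $N = \overline{\innp{\gamma}}$ in the reformulated hypothesis yields $\delta \in \overline{\innp{\gamma}}$. Since this holds for every $\gamma \in S$, we conclude
\[
\delta \in \ba_{\gamma \in S}\overline{\innp{\gamma}} = L_S,
\]
and $\delta \ne 1$, so $\delta \in L_S^\bullet$. In particular $L_S^\bullet \ne \es$, so $\LCM_{\Gamma,X}(S) \ne \es$ and $\lcm_{\Gamma,X}(S) = \norm{\delta_0}_X$ for any minimal-length element $\delta_0 \in \LCM_{\Gamma,X}(S)$. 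By definition of $\LCM_{\Gamma,X}(S)$ as the set of minimal-length nontrivial elements of $L_S$, and since $\delta \in L_S^\bullet$, we have $\norm{\delta_0}_X \le \norm{\delta}_X$, i.e. $\lcm_{\Gamma,X}(S) \le \norm{\delta}_X$, as claimed.

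There is essentially no analytic obstacle here; the only thing to be careful about is the quantifier scope in the hypothesis — making sure one is entitled to apply it to the particular homomorphism $\Gamma \to \Gamma/\overline{\innp{\gamma}}$ — and checking the degenerate bookkeeping, namely that exhibiting one element of $L_S^\bullet$ is exactly what rules out the case $\LCM_{\Gamma,X}(S) = \es$ in the definition of $\lcm_{\Gamma,X}$. So I expect the proof to be short: reformulate the hypothesis in terms of normal subgroups, plug in $\overline{\innp{\gamma}}$ for each $\gamma \in S$, intersect, and read off the length bound.
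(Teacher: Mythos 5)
Your proposal is correct and is essentially the same argument as the paper's: the paper directly applies the hypothesis to the quotient maps $\vp_\gamma\colon \Gamma \to \Gamma/\overline{\innp{\gamma}}$ for each $\gamma \in S$ to get $\delta \in \overline{\innp{\gamma}}$, intersects to place $\delta \in L_S$, and reads off the bound from the definition of $\lcm_{\Gamma,X}$. Your explicit reformulation in terms of normal subgroups is just a small elaboration of the same step, and your opening sentence about ``building a finite quotient'' is an unnecessary feint that you immediately abandon in favor of the correct route.
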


\begin{proof}
To prove this, for each $\gamma \in S$, note that $\vp_\gamma\co \Gamma \to \Gamma/\overline{\innp{\gamma}}$ is homomorphism for which $\ker \vp_\gamma \cap S \ne \es$. By assumption, $\delta \in \ker \vp_\gamma$ and thus in $\overline{\innp{\gamma}}$ for each $\gamma \in S$. Therefore, $\delta \in L_S$ and the claim now follows from the definition of $\lcm_{\Gamma,X}(S)$.
\end{proof}

\section{Lower bounds for free groups}\label{FreeGroupGrowth}

\nid In this section, using least common multiples, we will prove Theorem \ref{basiclowerbound}.

\paragraph{\textbf{1. Construct short least common multiples.}}

We begin with the following proposition.

\begin{prop}\label{FreeCandidateLemma}
Let $\gamma_1,\dots,\gamma_n \in F_m^\bullet$ and $\norm{\gamma_j}_X \leq d$ for all $j$. Then
\[ \lcm_{F_m,X}(\gamma_1,\dots,\gamma_n) \leq 6dn^2. \]
\end{prop}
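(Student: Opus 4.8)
The plan is to produce a single nontrivial element $\delta$ lying in every normal closure $\overline{\innp{\gamma_j}}$ with a short word. I would work in the free group $F_m = \langle X \rangle$ and think geometrically via the wedge of $m$ circles (the rose $R_m$) and its covers. The key idea is that for each $\gamma_j$, the normal closure $\overline{\innp{\gamma_j}}$ is exactly the set of elements that lie in every subgroup containing $\gamma_j$ that is normal; equivalently, a product of conjugates of $\gamma_j$ lies in $\overline{\innp{\gamma_j}}$, and one should build $\delta$ simultaneously as such a product for each index. A clean route: first reduce to cyclically reduced $\gamma_j$ of length at most $d$, since conjugating $\gamma_j$ changes $\overline{\innp{\gamma_j}}$ not at all and we may replace each $\gamma_j$ by its cyclically reduced form without increasing $d$.

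The main construction I would carry out is this. Since there are at most $(2m)^{\le d}$ reduced words of length $\le d$, but we want a bound depending on $n$ and $d$ polynomially rather than exponentially, I instead argue combinatorially inside $F_m$ directly. Consider the word $w = \gamma_1 \gamma_2 \cdots \gamma_n$ (or a suitably conjugated/commutator-twisted version). This need not lie in all the $\overline{\innp{\gamma_j}}$, so the right object is more subtle: I would look for $\delta$ of the form of a product of commutators or a nested iterated construction. The standard trick for "simultaneous membership in normal closures" is: the element $[\gamma_1, [\gamma_2, [\dots, \gamma_n]\dots]]$, or more efficiently an element built by the following inductive scheme. Define $\delta_1 = \gamma_1$. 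Given $\delta_{k}$ lying in $\overline{\innp{\gamma_1}}\cap\cdots\cap\overline{\innp{\gamma_k}}$, we want $\delta_{k+1}$ lying additionally in $\overline{\innp{\gamma_{k+1}}}$; take $\delta_{k+1} = [\delta_k, u\gamma_{k+1}u^{-1}]$ for a suitable conjugator $u$ ensuring nontriviality (using that $F_m$ is bi-orderable / has trivial center / that a commutator of two nontrivial elements with no common root is nontrivial, or a small-cancellation-style length estimate). Then $\delta_{k+1}$ is a commutator, hence lies in the normal closure of each of its entries, so it lies in $\overline{\innp{\delta_k}} \subseteq \bigcap_{i\le k}\overline{\innp{\gamma_i}}$ and in $\overline{\innp{\gamma_{k+1}}}$. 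After $n$ steps, $\delta = \delta_n$ lies in $L_S$.

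The length bound is where the $6dn^2$ comes from and where the real work lies. Naively, each commutator step roughly doubles the length, giving an exponential bound $2^n d$, which is far too weak. So the plan must instead be to \emph{add} rather than multiply: build $\delta$ as a product $\prod_{j=1}^n c_j \gamma_j^{\pm 1} c_j^{-1}$ of conjugates where the conjugators $c_j$ are chosen to arrange massive cancellation, so that after free reduction the total length is $O(dn^2)$ rather than $O(dn \cdot \max|c_j|)$. Concretely I would pick the conjugators to be initial/terminal segments so that consecutive terms telescope; the bound $\norm{c_j} \le$ (something like $3jd$) would yield a sum $\sum_j (\text{length contributed})$ of order $dn^2$, and the factor $6$ absorbs the constants from cyclic reduction and from the conjugator bookkeeping. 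The hard part — and the step I expect to be the main obstacle — is precisely this: exhibiting explicit conjugators $c_j$ for which (i) the resulting product is genuinely nontrivial (this needs a cancellation/length argument, perhaps a "no full cancellation" lemma or an appeal to the action on a tree), and (ii) the reduced length is provably $\le 6dn^2$. Everything else (reduction to cyclically reduced words, the fact that conjugates and commutators lie in normal closures, Lemma~\ref{WordLengthForLCM} to package the conclusion) is routine. Once $\delta$ is in hand with $\norm{\delta}_X \le 6dn^2$, the definition of $\lcm_{F_m,X}$ immediately gives $\lcm_{F_m,X}(\gamma_1,\dots,\gamma_n) \le 6dn^2$.
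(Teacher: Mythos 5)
Your opening observation is correct and matches the paper's setup: one wants a single short nontrivial element in $L_S = \bigcap_j \overline{\innp{\gamma_j}}$, and iterated commutators are the right mechanism because $[a,b]$ lies in both $\overline{\innp{a}}$ and $\overline{\innp{b}}$, with nontriviality enforceable by conjugating one entry by a generator. You also correctly diagnose that a \emph{linear} chain $\delta_{k+1}=[\delta_k,\ u\gamma_{k+1}u^{-1}]$ roughly doubles length each step and so gives an exponential bound $\sim 2^n d$, which is far too weak. Up to this point you are on track.

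The gap is in what you do next. You abandon commutators and instead propose building $\delta$ as a telescoping product of conjugates $\prod_{j=1}^n c_j\gamma_j^{\pm1}c_j^{-1}$. Such a product cannot possibly work, for a structural rather than a length reason: a product of conjugates of $\gamma_1,\dots,\gamma_n$ lies in the \emph{join} $\overline{\innp{\gamma_1,\dots,\gamma_n}}$ of the normal closures, not in their \emph{intersection} $L_S$. For instance $c_1\gamma_1 c_1^{-1}\cdot c_2\gamma_2 c_2^{-1}$ is in $\overline{\innp{\gamma_1}}$ only if $c_2\gamma_2c_2^{-1}$ already is, which has no reason to hold. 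So no choice of conjugators, however clever the cancellation, will land you in $L_S$; any correct construction has to be multiplicative in the sense that membership in every $\overline{\innp{\gamma_j}}$ is preserved at each step, and commutators are essentially the only cheap device for that.

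The missing idea that salvages the commutator approach is a balanced, divide-and-conquer pairing rather than a linear chain. Pad the list to $2^k$ elements with $n\le 2^k\le 2n$, pair them up, replace $\gamma_{2i}$ by a conjugate $\mu_i\gamma_{2i}\mu_i^{-1}$ (with $\mu_i\in X$) so that $[\gamma_{2i-1},\gamma_{2i}]\ne 1$, and set $\gamma_i^{(1)}=[\gamma_{2i-1},\gamma_{2i}]$. This halves the number of elements while only roughly quadrupling the max length (a commutator of two words of length $\le L$ has length $\le 4L+O(1)$, and the conjugation costs only $O(1)$). Iterating for $k\approx\log_2 n$ rounds produces a single nontrivial $\gamma_1^{(k)}\in L_S$ of length $\lesssim 4^k d\lesssim n^2 d$, and careful bookkeeping of the additive constants yields the stated $6dn^2$ via Lemma~\ref{WordLengthForLCM}. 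The depth being logarithmic rather than linear is precisely what converts your exponential $2^n d$ into the polynomial $n^2 d$.
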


\nid In the proof below, the reader will see that the important fact that we utilize is the following. For a pair of non-trivial elements $\gamma_1,\gamma_2$ in a nonabelian free group, we can conjugate $\gamma_1$ by a generator $\mu \in X$ to ensure that $\mu^{-1}\gamma_1\mu$ and $\gamma_2$ do not commute. This fact will be used repeatedly.

\begin{proof}
Let $k$ be the smallest natural number such that $n \leq 2^k$ (the inequality $2^k \leq 2n$ also holds). We will construct an element $\gamma$ in $L_{\set{\gamma_1,\dots,\gamma_n}}$ such that
\[ \norm{\gamma}_X \leq 6d4^k. \]
By Lemma \ref{WordLengthForLCM}, this implies the inequality asserted in the statement of the proposition. To this end, we augment the set $\set{\gamma_1,\dots,\gamma_n}$ by adding enough additional elements $\mu \in X$ such that our new set has precisely $2^k$ elements that we label $\set{\gamma_1,\dots,\gamma_{2^k}}$. Note that it does not matter if the elements we add to the set are distinct. For each pair $\gamma_{2i-1},\gamma_{2i}$, we replace $\gamma_{2i}$ by a conjugate $\mu_i\gamma_{2i}\mu_i^{-1}$ for $\mu_i \in X$ such that $[\gamma_{2i-1},\mu_i^{-1}\gamma_{2i}\mu_i]\ne 1$ and in an abuse of notation, continue to denote this by $\gamma_{2i}$. We define a new set of elements $\set{\gamma_i^{(1)}}$ by setting $\gamma_i^{(1)} = [\gamma_{2i-1},\gamma_{2i}]$. Note that $\norm{\gamma_i^{(1)}}_X \leq 4(d+2)$. We have $2^{k-1}$ elements in this new set and we repeat the above, again replacing $\gamma_{2i}^{(1)}$ with a conjugate by $\mu_i^{(1)}\in X$ if necessary to ensure that $\gamma_{2i-1}^{(1)}$ and $\gamma_{2i}^{(1)}$ do not commute. This yields $2^{k-2}$ non-trivial elements $\gamma_i^{(2)}=[\gamma_{2i-1}^{(1)},\gamma_{2i}^{(1)}]$ with $\norm{\gamma_i^{(2)}}_X \leq 4(4(d+2)+2)$. Continuing this inductively, at the $k$--stage we obtain an element $\gamma_1^{(k)} \in L_S$ such that
\[ \norm{\gamma_1^{(k)}}_X \leq 4^kd + a_k, \]
where $a_k$ is defined inductively by $a_0=0$ and
\[ a_j = 4(a_{j-1}+2). \]
The assertion
\[ a_j = 2\pr{\sum_{\ell=1}^j 4^\ell}, \]
is validated with an inductive proof. Thus, we have
\[ \norm{\gamma_1^{(k)}}_X \leq 4^kd+a_k \leq 3\pr{4^kd + 4^k} \leq 6d(4^k). \]
\end{proof}

\nid An immediate corollary of Proposition \ref{FreeCandidateLemma} is the following.

\begin{cor}\label{PrimeNumberTheorem}
\[ \lcm_{F_m,X}(B_{F^m,X}^\bullet(n)) \leq 6n(\word_{F_m,X}(n))^2. \]
\end{cor}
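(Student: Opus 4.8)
The plan is to derive Corollary~\ref{PrimeNumberTheorem} directly from Proposition~\ref{FreeCandidateLemma} by applying it to the finite set $S = B_{F_m,X}^\bullet(n)$ of all nontrivial elements of word length at most $n$. First I would set $d = n$, which is the obvious uniform bound on $\norm{\gamma}_X$ for $\gamma \in B_{F_m,X}^\bullet(n)$, and let $N = \abs{B_{F_m,X}^\bullet(n)}$ be the number of elements in this set. By definition of $\word_{F_m,X}$ we have $N = \word_{F_m,X}(n) - 1 \leq \word_{F_m,X}(n)$ (the $-1$ coming from excluding the identity, which only helps).

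Next I would invoke Proposition~\ref{FreeCandidateLemma} with these $n$ elements playing the role of $\gamma_1,\dots,\gamma_N$ and with the bound $d = n$. The proposition yields
\[ \lcm_{F_m,X}(B_{F_m,X}^\bullet(n)) = \lcm_{F_m,X}(\gamma_1,\dots,\gamma_N) \leq 6 d N^2 = 6 n N^2. \]
Substituting $N \leq \word_{F_m,X}(n)$ gives $6nN^2 \leq 6n(\word_{F_m,X}(n))^2$, which is exactly the claimed inequality. (One should double-check the edge case where $B_{F_m,X}^\bullet(n)$ is empty, i.e.\ $n = 0$; there $\lcm$ is $0$ by convention and the inequality holds trivially, and for $n \geq 1$ the set is nonempty since $F_m$ is infinite, so Proposition~\ref{FreeCandidateLemma} applies.)

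I do not anticipate any real obstacle here: this is a routine specialization, and essentially all the work has already been done inside the proof of Proposition~\ref{FreeCandidateLemma}. The only minor point of care is bookkeeping between $\word_{F_m,X}(n)$ (which counts the ball including the identity) and the cardinality of $B_{F_m,X}^\bullet(n)$ (which excludes it); since we only need an upper bound this discrepancy is harmless, and I would simply absorb it by noting $\abs{B_{F_m,X}^\bullet(n)} \leq \word_{F_m,X}(n)$. The resulting proof is therefore just two or three lines.

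\begin{proof}[Proof of Corollary~\ref{PrimeNumberTheorem}]
Let $S = B_{F_m,X}^\bullet(n)$ and write $N = \abs{S}$, so that $N \leq \word_{F_m,X}(n)$. Every $\gamma \in S$ satisfies $\norm{\gamma}_X \leq n$, so applying Proposition~\ref{FreeCandidateLemma} with $d = n$ to the $N$ elements of $S$ gives
\[ \lcm_{F_m,X}(B_{F_m,X}^\bullet(n)) \leq 6 n N^2 \leq 6 n (\word_{F_m,X}(n))^2. \]
\end{proof}
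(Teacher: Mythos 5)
Your proof is correct and matches the paper's intent exactly: the paper labels this "an immediate corollary" of Proposition~\ref{FreeCandidateLemma} and does not spell out the specialization, which is precisely the substitution $S = B_{F_m,X}^\bullet(n)$, $d=n$, $N = \abs{S} \leq \word_{F_m,X}(n)$ that you perform. The bookkeeping remark about $\abs{B^\bullet}$ versus $\word_{F_m,X}(n)$ is a sensible sanity check and the edge case $n=0$ is handled appropriately.
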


\paragraph{\textbf{2. Proof of Theorem \ref{basiclowerbound}.}}

We now give a short proof of Theorem \ref{basiclowerbound}. We begin with the following proposition.

\begin{prop}\label{freelowerbound}
Let $\Gamma$ be a nonabelian free group of rank $m$. Then $n^{1/3} \preceq \DM_{\Gamma,X}(n)$. 
\end{prop}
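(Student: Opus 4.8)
The plan is to combine Corollary \ref{PrimeNumberTheorem} with a counting argument that pits the number of short elements of $F_m$ against the number of small normal subgroups. First I would set $N(n) = \DM_{F_m,X}(n)$ and consider the ball $B = B_{F_m,X}^\bullet(n)$. By definition of $N$, for every $\gamma \in B$ there is a normal subgroup $\Delta_\gamma \lhd F_m$ of index at most $N(2n)$ with $\gamma \notin \Delta_\gamma$; take $\varphi\co F_m \to Q$ to be the quotient by the intersection of all normal subgroups of index at most $N(2n)$, where $Q = F_m/\Omega_{N(2n)}(F_m)$. As in the proof of (\ref{BasicInequality}), $\varphi$ is injective on $B_{F_m,X}(n)$, so in particular $\ker\varphi \cap B = \es$.

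Next I would apply Lemma \ref{WordLengthForLCM} in the contrapositive direction, or rather use Corollary \ref{PrimeNumberTheorem} directly: set $S = B$ and let $\delta \in \LCM_{F_m,X}(S)$, so $\norm{\delta}_X = \lcm_{F_m,X}(B) \leq 6n(\word_{F_m,X}(n))^2$. The key point is that $\delta$ is a nontrivial element lying in every $\overline{\innp{\gamma}}$ for $\gamma \in B$. Now $\ker\varphi$ is a normal subgroup of $F_m$; if $\ker\varphi$ contained some $\gamma \in B$ we would contradict injectivity of $\varphi$ on the ball, hence $\ker\varphi \cap B = \es$, i.e. $\ker\varphi$ contains no element of $\LCM$'s defining set — wait, that is the wrong direction. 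Instead the correct move: since $\delta \neq 1$ and $Q$ is a quotient in which $B$ injects, consider any $\gamma \in B$ of infinite order with $\varphi(\gamma)$ of order $q$ in $Q$; then $\gamma^q \in \ker\varphi \cap \overline{\innp{\gamma}}$, but more usefully, because $\delta \in \overline{\innp{\gamma}}$ for \emph{all} $\gamma \in B$, one shows $\varphi(\delta)$ must be trivial in $Q$ provided $\ker\varphi$ meets $\overline{\innp{\gamma}}$ nontrivially for each such $\gamma$ — which it does, since $|Q| < \infty$ forces $\varphi(\gamma)$ to have finite order $q_\gamma$, whence $\gamma^{q_\gamma} \in \ker\varphi \cap \innp{\gamma} \subseteq \ker\varphi \cap \overline{\innp{\gamma}}$ is nontrivial. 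So Lemma \ref{WordLengthForLCM} applies with $Q$ the quotient and yields $\delta \in \ker\varphi$. Hence $\varphi(\delta) = 1$, which (since $\varphi$ is injective on $B_{F_m,X}(n)$) forces $\norm{\delta}_X > n$, i.e.
\[ n < 6n(\word_{F_m,X}(n))^2. \]
That inequality is vacuous, so I must instead extract the bound from a more careful reading: the relation that actually carries content is $\lcm_{F_m,X}(B_{F_m,X}^\bullet(n)) > n$ whenever the quotient $Q$ realizing $\DM$ on the $2n$-ball is too small — no. Let me restate the genuine mechanism: since $\varphi$ embeds $B_{F_m,X}(n)$ into $Q$ and $\delta \in \ker\varphi$, we need $\norm{\delta}_X \geq n+1$; combined with $\norm{\delta}_X \leq 6n\,(\word_{F_m,X}(n))^2$ this gives no growth info on $N$ directly, so the real argument compares $|Q|$: we have $\word_{F_m,X}(n) = |B_{F_m,X}(n)| \leq |Q| \leq N(2n)^{s_{F_m}(N(2n))}$, and separately $\delta \in \ker\varphi$ with $\norm{\delta}_X$ small means $\delta$ survives in no small quotient, which forces $N(2n) \succeq \norm{\delta}_X^{-1}\cdot(\text{something})$ — this is getting tangled, so the honest plan is:

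\textbf{The actual plan.} Use Corollary \ref{PrimeNumberTheorem} to produce, for each $n$, a nontrivial $\delta_n$ with $\norm{\delta_n}_X \leq 6n(\word_{F_m,X}(n))^2 =: R(n)$ lying in $\overline{\innp\gamma}$ for every $\gamma \in B_{F_m,X}^\bullet(n)$. By Lemma \ref{WordLengthForLCM}'s hypothesis reversed: any finite quotient $\varphi$ that is injective on $B_{F_m,X}(n)$ must kill $\delta_n$ (because $\ker\varphi$ meets every $\overline{\innp\gamma}$, $\gamma\in B$, as $|Q|<\infty$), hence must kill some element of word length $\leq R(n)$ while being injective on the $n$-ball; in particular $\delta_n \neq 1$ means $R(n) > n$, and more to the point, $\D_{F_m}^\lhd(\delta_n) \leq \G_{F_m,X}(n)$. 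Since $\word_{F_m,X}(n) \geq (2m-1)^{n}$ grows exponentially, $R(n) \leq 6n(2m)^{2n} \leq C^n$. Now invoke the chain: $\delta_n$ has word length $\leq R(n)$, so $\D_{F_m}^\lhd(\delta_n) \leq \DM_{F_m,X}(R(n))$; but also every finite-index normal subgroup avoiding $\delta_n$ must — no. The clean endpoint: because $\delta_n$ lies in $\overline{\innp\gamma}$ for all $\gamma$ in the exponentially-large ball $B$, any normal subgroup $\Delta$ with $\delta_n \notin \Delta$ satisfies $\gamma \notin \Delta$ for all $\gamma\in B$, hence $F_m/\Delta$ contains an embedded copy of $B$, so $[F_m:\Delta] \geq \word_{F_m,X}(n) \geq (2m-1)^n$. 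Therefore $\D_{F_m}^\lhd(\delta_n) \geq (2m-1)^n$, while $\norm{\delta_n}_X \leq R(n) = 6n(\word_{F_m,X}(n))^2 \leq 6n(2m-1)^{2n}(2m)^{2}$. Writing $\norm{\delta_n}_X \leq N$, we get $(2m-1)^n \leq \D_{F_m}^\lhd(\delta_n) \leq \DM_{F_m,X}(N)$ with $N \leq C' (2m-1)^{2n}$ for a constant $C'$; solving, $(2m-1)^n \succeq N^{1/2}$, hence $\DM_{F_m,X}(N) \succeq N^{1/2}$ — which is even stronger than $N^{1/3}$, the $1/3$ presumably arising from using the cruder bound $6n(\word_{F_m,X}(n))^2$ before optimizing, or from a factor I am overlooking in the conjugation step of Proposition \ref{FreeCandidateLemma} (the $+2$'s and the passage $2^k \leq 2n$ cost a bit). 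So in the writeup I would: (i) fix $n$, let $\delta_n$ be an element of $\LCM_{F_m,X}(B_{F_m,X}^\bullet(n))$; (ii) observe $\delta_n \neq 1$ and $\delta_n \in \overline{\innp\gamma}$ for every $\gamma$ in the ball; (iii) deduce that any $\Delta \lhd F_m$ with $\delta_n \notin \Delta$ has $B_{F_m,X}(n) \hookrightarrow F_m/\Delta$, so $[F_m:\Delta] \geq \word_{F_m,X}(n)$; (iv) conclude $\DM_{F_m,X}(\norm{\delta_n}_X) \geq \D_{F_m}^\lhd(\delta_n) \geq \word_{F_m,X}(n)$; (v) plug in $\norm{\delta_n}_X \leq 6n(\word_{F_m,X}(n))^2$ from Corollary \ref{PrimeNumberTheorem} and the exponential lower bound $\word_{F_m,X}(n) \succeq \alpha^n$; (vi) after a change of variable $N \simeq (\word_{F_m,X}(n))^2$, read off $N^{1/2} \preceq \word_{F_m,X}(n) \leq \DM_{F_m,X}(CN^{1/2}\cdot\text{poly}) $, and track the constants honestly to land on $n^{1/3}$.

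The main obstacle is bookkeeping, not ideas: one must verify carefully that in step (iii) the finite quotient $F_m/\Delta$ really is injective on $B_{F_m,X}(n)$ (this uses that $\delta_n$ being trivial in $F_m/\Delta$ would follow — by Lemma \ref{WordLengthForLCM} applied to the quotient map, since $F_m/\Delta$ is finite so the image of each $\gamma\in B$ has finite order and $\ker(F_m\to F_m/\Delta)$ therefore meets each $\overline{\innp\gamma}$ — from $\gamma \in \Delta$ for some $\gamma \in B$, contradiction), and then chase the polynomial-and-exponential estimates through the substitution so that the final exponent comes out as $1/3$ rather than the naively-expected $1/2$; the gap is absorbed by the slack in $6dn^2$ versus the optimal $\simeq dn$ and by the $2^k\le 2n$ doubling. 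I expect no conceptual difficulty once Lemma \ref{WordLengthForLCM} and Corollary \ref{PrimeNumberTheorem} are in hand — the content is entirely front-loaded into the construction of short least common multiples.
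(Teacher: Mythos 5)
Your proposal goes down the wrong road: taking the $\LCM$ of the entire ball $B^\bullet_{F_m,X}(n)$ does not produce the exponent $1/3$, and the gap is not, as you conjecture, bookkeeping slack in Proposition \ref{FreeCandidateLemma}. Let us trace your scheme honestly. With $\delta_n \in \LCM_{F_m,X}(B^\bullet_{F_m,X}(n))$ you have $\norm{\delta_n}_X \leq 6n\,(\word_{F_m,X}(n))^2 \simeq (2m-1)^{2n}$, an exponentially large bound. On the other side, $\delta_n \in \overline{\innp{\gamma}}$ for every nontrivial $\gamma$ in the $n$--ball, so any normal $\Delta$ with $\delta_n \notin \Delta$ satisfies $\Delta \cap B^\bullet_{F_m,X}(n) = \es$. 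But that only gives injectivity of $F_m \to F_m/\Delta$ on $B_{F_m,X}(\lfloor n/2\rfloor)$, not on $B_{F_m,X}(n)$ as you wrote in step (iii): to separate $\gamma_1\neq\gamma_2$ you need $\gamma_1\gamma_2^{-1}\notin\Delta$, and this element has length up to $2n$. Hence $[F_m:\Delta] \geq \word_{F_m,X}(n/2) \simeq (2m-1)^{n/2}$. Setting $N \simeq (2m-1)^{2n}$ and solving gives $\DM_{F_m,X}(N) \succeq N^{1/4}$, which is strictly weaker than the claimed $N^{1/3}$. Even your uncorrected version ($B(n)$ injects) only reaches $N^{1/2}$ in the exponent, and the true constant is $1/4$, so there is no honest chase of estimates that lands on $1/3$ from this starting point.

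The paper's proof uses a much smaller and cleverer test set: $S = \{x, x^2, \dots, x^n\}$, the first $n$ powers of a single generator. These are $n$ elements each of length at most $n$, so Proposition \ref{FreeCandidateLemma} with $d = n$ gives $\lcm_{F_m,X}(S) \leq 6n\cdot n^2 = 6n^3$, a \emph{polynomial} bound rather than the exponential one you get from the whole ball. On the other side, if $\vp\co F_m\to Q$ kills no $\delta\in\LCM_{F_m,X}(S)$, then $\vp$ must be injective on $S$ (if $\vp(x^i)=\vp(x^j)$ then $x^{j-i}\in\ker\vp$ with $1\leq j-i\leq n$, forcing $\delta\in\ker\vp$), so $|Q| > n$. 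That gives $\D^\lhd_{F_m,X}(\delta) \geq n$ with $\norm{\delta}_X \leq 6n^3$, i.e. $\DM_{F_m,X}(6n^3) \geq n$, which is exactly $n^{1/3}\preceq\DM_{F_m,X}(n)$. The whole point is the trade-off: a polynomial $\lcm$ bound against a merely linear lower bound on the index beats an exponential $\lcm$ bound against an exponential index lower bound. Your approach is not a variant of the paper's with sloppier constants; it is structurally a worse choice of $S$, and it proves a strictly weaker statement.
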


\begin{proof}
For $x \in X$, set
\[ S = \set{x,x^2,\dots,x^{n}}. \]
By Proposition \ref{FreeCandidateLemma}, if $\delta \in \LCM_{F_m,X}(S)$, then
\[ \norm{\delta}_X \leq 6n^3. \]
On the other hand, if $\vp\co F_m \to Q$ is a surjective homomorphism with $\vp(\delta) \ne 1$, the restriction of $\vp$ to $S$ is injective. In particular,
\[ \D_{F_m,X}^\lhd(\delta) \geq n. \]
In total, this shows that $n^{1/3} \preceq \DM_{F_m,X}$.
\end{proof}

\nid We now prove Theorem \ref{basiclowerbound}.

\begin{proof}[Proof of Theorem \ref{basiclowerbound}]
Let $\Gamma$ be a finitely generated group with finite generating set $X$. By assumption, $\Gamma$ contains a nonabelian free group $\Delta$. By passing to a subgroup, we may assume that $\Delta$ is finitely generated with free generating set $Y$. According to Lemma \ref{DivisibilityAsymptoticLemma} (b), we know that $\DM_{\Delta,Y}(n) \preceq \DM_{\Gamma,X}(n)$. By Proposition \ref{freelowerbound}, we also have $n^{1/3} \preceq \DM_{\Delta,Y}(n)$. The marriage of these two facts yields Theorem \ref{basiclowerbound}.
\end{proof}

\paragraph{\textbf{3. The basic girth inequality.}}

\nid We are now ready to prove (\ref{BasicGirthEquation}) for free groups. Again, for the reader's convenience, recall that (\ref{BasicGirthEquation}) is
\[ \G_{F_m,X}(n/2) \leq \DM_{F_m,X}(n/2)\pr{6n (\word_{F_m,X}(n))^{2}}. \]

\begin{proof}[Proof of (\ref{BasicGirthEquation})]
Let $\delta \in \LCM(B_{F_m,X}^\bullet(n))$ and let $Q$ be a finite group of order $\D_{F_m,X}^\lhd(\delta)$ such that there exists a homomorphism $\vp\co F_m \to Q$ with $\vp(\delta)\ne 1$. Since $\delta \in L_{B_{F_m,X}(n)}$, for each $\gamma$ in $B_{F_m,X}^\bullet(n)$, we also know that $\vp(\gamma) \ne 1$. In particular, it must be that $\vp$ restricted to $B_{F_m,X}^\bullet(n/2)$ is injective. The definitions of $\G_{F_m,X}$ and $\DM_{F_m,X}$ with Corollary \ref{PrimeNumberTheorem} yields
\[ \G_{F_m,X}(n/2) \leq \D^\lhd_{F_m,X}(\delta) \leq \DM_{F_m,X}(\norm{\delta}_X)\leq  \DM_{F_m,X}(6n(\word_{F_m,X}(n))^2), \]
and thus the desired inequality.
\end{proof}

\paragraph{\textbf{4. Proof of Theorem \ref{GirthPolynomialGrowth}.}}

We are also ready to prove Theorem \ref{GirthPolynomialGrowth}.

\begin{proof}
We must show that a finitely generated group $\Gamma$ is virtually nilpotent if and only if $\G_{\Gamma,X}$ has at most polynomial growth.
If $\G_{\Gamma,X}$ is bounded above by a polynomial in $n$, as $\word_{\Gamma,X} \leq \G_{\Gamma,X}$, it must be that $\word_{\Gamma,X}$ is bounded above by a polynomial in $n$. Hence, by Gromov's Polynomial Growth Theorem, $G$ is virtually nilpotent.\smallskip\smallskip

\nid Suppose now that $\Gamma$ is virtually nilpotent and set $\Gamma_{\textrm{Fitt}}$ to be the Fitting subgroup of $\Gamma$. It is well known (see \cite{Dek}) that $\Gamma_{\textrm{Fitt}}$ is torsion free and finite index in $\Gamma$. By Lemma \ref{GirthAsymptoticLemma} (c), we may assume that $\Gamma$ is torsion free. In this case, $\Gamma$ admits a faithful, linear representation $\psi$ into $\B{U}(d,\Z)$, the group of upper triangular, unipotent matrices with integer coefficients in $\GL(d,\Z)$ (see \cite{Dek}). Under this injective homomorphism, the elements in $B_{\Gamma,X}(n)$ have matrix entries with norm bounded above by $Cn^k$, where $C$ and $k$ only depends on $\Gamma$. Specifically, we have
\[ \abs{(\psi(\gamma))_{i,j}} \leq C\norm{\gamma}_X^k. \]
This is a consequence of the Hausdorff--Baker--Campbell formula (see \cite{Dek}).
Let $r$ be the reduction homomorphism
\[ r \co \B{U}(d,\Z) \lra \B{U}(d,\Z/ 2Cn^k \Z) \]
defined by reducing matrix coefficients modulo $2 Cn^k$. By selection, the restriction of $r$ to $B_{\Gamma,X}^\bullet(n)$ is injective. So we have
\begin{equation}\label{CardinalityInequality}
\abs{r(\psi(\Gamma))} \leq  \abs{\B{U}(d,\Z/ 2Cn^k \Z)} \leq (2Cn^k)^{d^2}.
\end{equation}
This inequality gives
\[ \G_{\Gamma,X}(n) \leq (2Cn^k)^{d^2} = C_1n^{kd^2}. \]
Therefore, $\G_{\Gamma,X}(n)$ is bounded above by a polynomial function in $n$ as claimed.
\end{proof}

\paragraph{\textbf{5. Generalities.}}

The results and methods for the free group in this section can be generalized. Specifically, we require the following two properties:
\begin{itemize}
\item[(i)]
$\Gamma$ has an element of infinite order.
\item[(ii)]
For all non-trivial $\gamma_1,\gamma_2 \in \Gamma$, there exists $\mu_{1,2} \in X$ such that $[\gamma_1,\mu_{1,2}\gamma_2\mu_{1,2}^{-1}]\ne 1$.
\end{itemize}

\nid
With this, we can state a general result established with an identical method taken for the free group.

\begin{thm}
Let $\Gamma$ be finitely generated group that satisfies (i) and (ii). Then
\begin{itemize}
\item[(a)]
$\G_{\Gamma,X}(n/2) \leq \DM_{\Gamma,X}(n/2)\pr{6n (\word_{\Gamma,X}(n))^{2}}$.
\item[(b)]
$n^{1/3} \preceq \DM_{\Gamma,X}$.
\end{itemize}
\end{thm}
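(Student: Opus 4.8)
The plan is to mimic, essentially verbatim, the arguments already carried out for the free group in parts 1--4 of this section, isolating exactly where the free group was used and observing that properties (i) and (ii) are precisely the two ingredients that were needed. First I would redo Proposition~\ref{FreeCandidateLemma} in this generality: given non-trivial $\gamma_1,\dots,\gamma_n\in\Gamma$ with $\norm{\gamma_j}_X\le d$, pad the list up to $2^k$ elements with generators $\mu\in X$ (where $k$ is minimal with $n\le 2^k$, so $2^k\le 2n$), and build nested commutators. The only place the free-group structure entered the original proof was the remark that for any two non-trivial elements one can conjugate one of them by a generator so that the pair fails to commute — this is exactly hypothesis (ii). So at each stage we replace $\gamma_{2i}^{(\ell)}$ by $\mu_i^{(\ell)}\gamma_{2i}^{(\ell)}(\mu_i^{(\ell)})^{-1}$ to guarantee $[\gamma_{2i-1}^{(\ell)},\gamma_{2i}^{(\ell)}]\ne 1$, and the word-length recursion $a_j=4(a_{j-1}+2)$, $a_0=0$, goes through unchanged, giving a non-trivial $\gamma_1^{(k)}\in L_{\{\gamma_1,\dots,\gamma_n\}}$ with $\norm{\gamma_1^{(k)}}_X\le 4^k d+a_k\le 6d4^k\le 6dn^2$. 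By Lemma~\ref{WordLengthForLCM} this yields $\lcm_{\Gamma,X}(\gamma_1,\dots,\gamma_n)\le 6dn^2$, and applying it to $B_{\Gamma,X}^\bullet(n)$ gives the analogue of Corollary~\ref{PrimeNumberTheorem}: $\lcm_{\Gamma,X}(B_{\Gamma,X}^\bullet(n))\le 6n(\word_{\Gamma,X}(n))^2$.

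With this in hand, part (b) follows as in Proposition~\ref{freelowerbound}: pick an element $x\in\Gamma$ of infinite order — here is where (i) is used — let $S=\{x,x^2,\dots,x^n\}$, so every $\gamma_j=x^j$ has $\norm{\gamma_j}_X\le n$, and the refined Proposition gives $\delta\in\LCM_{\Gamma,X}(S)$ with $\norm{\delta}_X\le 6n^3$. If $\vp\co\Gamma\to Q$ is a finite quotient with $\vp(\delta)\ne 1$ then $\delta\in L_S\subset\overline{\innp{x^j}}$ for each $j$ forces $\vp(x^j)\ne 1$ for $1\le j\le n$ (if some $x^j\in\ker\vp$ then $\overline{\innp{x^j}}\subset\ker\vp$ so $\delta\in\ker\vp$), i.e. $\vp|_{\{x,\dots,x^n\}}$ is injective; since $x$ has infinite order these are $n$ distinct elements, so $|Q|\ge n$. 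Hence $\D_{\Gamma,X}^\lhd(\delta)\ge n$ while $\norm{\delta}_X\le 6n^3$, which is exactly the statement $n^{1/3}\preceq\DM_{\Gamma,X}(n)$.

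For part (a), I would copy the proof of~(\ref{BasicGirthEquation}): take $\delta\in\LCM_{\Gamma,X}(B_{\Gamma,X}^\bullet(n))$ and a finite group $Q$ of order $\D_{\Gamma,X}^\lhd(\delta)$ with a homomorphism $\vp\co\Gamma\to Q$ satisfying $\vp(\delta)\ne 1$. Since $\delta\in L_{B_{\Gamma,X}(n)}$, every $\gamma\in B_{\Gamma,X}^\bullet(n)$ has $\vp(\gamma)\ne 1$, so for $\gamma_1,\gamma_2\in B_{\Gamma,X}(n/2)$ distinct we get $\vp(\gamma_1\gamma_2^{-1})\ne 1$, i.e. $\vp|_{B_{\Gamma,X}(n/2)}$ is injective. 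Therefore $\G_{\Gamma,X}(n/2)\le|Q|=\D_{\Gamma,X}^\lhd(\delta)\le\DM_{\Gamma,X}(\norm{\delta}_X)\le\DM_{\Gamma,X}(6n(\word_{\Gamma,X}(n))^2)$, using the generalized Corollary~\ref{PrimeNumberTheorem} for the last step. (The statement as displayed writes the right side as $\DM_{\Gamma,X}(n/2)(6n(\word_{\Gamma,X}(n))^2)$, matching the form of~(\ref{BasicGirthEquation}) in the text, which should be read the same way.)

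The only potential obstacle is bookkeeping rather than mathematics: one must check that property (ii) really does supply the conjugating generator at \emph{every} level of the commutator tree, including when one or both entries are the padding generators $\mu\in X$ themselves and when they are previously-constructed commutators — but (ii) is stated for all pairs of non-trivial elements of $\Gamma$, so it applies verbatim at each stage, and the non-triviality of the new commutators is exactly what the conjugation arranges. There is nothing else in the free-group argument that is not already abstracted into (i) and (ii), so the proof is a direct transcription.
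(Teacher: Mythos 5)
Your proposal is correct and is exactly the approach the paper intends: the theorem is stated without proof, with the remark that it is ``established with an identical method taken for the free group,'' and your writeup makes explicit that the only free-group input to Proposition~\ref{FreeCandidateLemma}, Corollary~\ref{PrimeNumberTheorem}, Proposition~\ref{freelowerbound}, and the proof of~(\ref{BasicGirthEquation}) is precisely the conjugation-to-noncommuting property~(ii) together with the existence of an infinite-order element~(i). One small imprecision: since $x$ from (i) need not lie in the generating set $X$, you have $\norm{x^j}_X \le j\norm{x}_X \le n\norm{x}_X$ rather than $\norm{x^j}_X \le n$, which gives $\norm{\delta}_X \le 6\norm{x}_X n^3$; the extra constant is harmless for the $\preceq$ conclusion of part~(b), and you correctly flag the typo in the displayed form of part~(a), where $\DM_{\Gamma,X}(n/2)\pr{\cdots}$ should be read as $\DM_{\Gamma,X}\pr{6n(\word_{\Gamma,X}(n))^2}$ as the derivation shows.
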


\section{The proof of Theorem \ref{toughlowerbound}}
\label{toughlowerboundSection}

\nid In this section we prove Theorem \ref{toughlowerbound}. For sake of clarity, before commencing with the proof, we outline the basic strategy. We will proceed via contradiction, assuming that $\max \D_{F_m}(n) \preceq \log n$. We will apply this assumption to a family of test elements $\delta_n$ derived from least common multiples of certain simple sets $S(n)$ to produce a family of finite index subgroups $\Delta_n$ in $F_m$. Employing the Prime Number Theorem, we will obtain upper bounds (see (\ref{LinearBound}) below) for the indices $[F_m:\Delta_n]$. Using covering space theory and a simple albeit involved inductive argument, we will derive the needed contradiction by showing the impossibility of these bounds. The remainder of this section is devoted to the details.

\begin{proof}[Proof of Theorem \ref{toughlowerbound}]
Our goal is to show $\max \D_{F_m}(n) \npreceq \log(n)$ for $m \geq 2$. By Lemma 1.1 in \cite{Bou}, it suffices to show this for $m=2$. To that end, set $\Gamma = F_2$ with free generating set $X=\set{x,y}$, and
\[ S(n) = \set{x,x^2,\dots,x^{\lcm(1,\dots,n)}}. \]
We proceed by contradiction, assuming that $\max \D_\Gamma(n) \preceq \log(n)$. By definition, there exists a constant $C>0$ such that $\max \D_\Gamma(n) \leq C\log(Cn)$ for all $n$. For any $\delta_n \in \LCM_{\Gamma,X}(S(n))$, this implies that there exists a finite index subgroup $\Delta_n < \Gamma$ such that $\delta_n \notin \Delta_n$ and
\[ [\Gamma:\Delta_n] \leq C\log(C\norm{\delta_n}_X). \]
According to Proposition \ref{FreeCandidateLemma}, we also know that
\[ \norm{\delta_n}_X \leq D(\lcm(1,\dots,n))^3. \]
In tandem, this yields
\[ [\Gamma:\Delta_n] \leq C\log(CD(\lcm(1,\dots,n))^3). \]
By the Prime Number Theorem, we have
\[ \lim_{n \to \iny} \frac{\log(\lcm(1,\dots,n))}{n} = 1. \]
Therefore, there exists $N>0$ such that for all $n \geq N$
\[ \frac{n}{2} \leq \log(\lcm(1,\dots,n)) \leq \frac{3n}{2}. \]
Combining this with the above, we see that there exists a constant $M>0$ such that for all $n\geq N$,
\begin{equation}\label{LinearBound}
[\Gamma:\Delta_n] \leq C\log(CD) + \frac{9Cn}{2} \leq Mn.
\end{equation}
Our task now is to show (\ref{LinearBound}) cannot hold. In order to achieve the desired contradiction, we use covering space theory. With that goal in mind, let $S^1 \vee S^1$ be the wedge product of two circles and recall that we can realize $\Gamma$ as $\pi_1(S^1 \vee S^1,*)$ by identifying $x,y$ with generators for the fundamental groups of the respective pair of circles. Here, $*$ serves as both the base point and the identifying point for the wedge product. According to covering space theory, associated to the conjugacy class $[\Delta_n]$ of $\Delta_n$ in $\Gamma$, is a finite cover $Z_n$ of $S^1 \vee S^1$ of covering degree $[\Gamma:\Delta_n]$ (unique up to covering isomorphisms). Associated to a conjugacy class $[\gamma]$ in $\Gamma$ is a closed curve $c_\gamma$ on $S^1 \vee S^1$. The distinct lifts of $c_\gamma$ to $Z_n$ correspond to the distinct $\Delta_n$--conjugacy classes of $\gamma$ in $\Gamma$. The condition that $\gamma \notin \Delta_n$ implies that at least one such lift cannot be a closed loop.\smallskip\smallskip

\nid Removing the edges of $Z_n$ associated to the lifts of the closed curve associated to $[y]$, we get a disjoint union of topological circles, each of which is a union of edges associated to the lifts of the loop associated to $[x]$. We call these circles $x$--cycles and say the length of an $x$--cycle is the total number of edges of the cycle. The sum of the lengths over all the distinct $x$--cycles is precisely $[\Gamma:\Delta_n]$. For an element of the form $x^\ell$, each lift of the associated curve $c_{x^\ell}$ is contained on an $x$--cycle. Using elements of the form $x^\ell$, we will produce enough sufficiently long $x$--cycles in order to contradict (\ref{LinearBound}).\smallskip\smallskip

\nid We begin with the element $x^{\lcm(1,\dots,m)}$ for $1 \leq m \leq n$. This will serve as both the base case for an inductive proof and will allow us to introduce some needed notation. By construction, some $\Gamma$--conjugate of $x^{\lcm(1,\dots,m)}$ is not contained in $\Delta_n$. Indeed, $x^\ell$ for any $1 \leq \ell \leq \lcm(1,\dots,n)$ is never contained in the intersection of all conjugates of $\Delta_n$. Setting $c_m$ to be the curve associated to $x^{\lcm(1,\dots,m)}$, this implies that there exists a lift of $c_m$ that is not closed in $Z_n$. Setting $C_n^{(1)}$ to be the $x$--cycle containing this lift, we see that the length of $C_n^{(1)}$ must be at least $m$. Otherwise, some power $x^\ell$ for $1 \leq \ell \leq m$ would have a closed lift for this base point and this would force this lift of $c_m$ to be closed. Setting $k_{n,m}^{(1)}$ to be the associated length, we see that $m \leq k_{n,m}^{(1)} \leq Mn$ when $n \geq N$.\smallskip\smallskip

\nid Using the above as the base case, we claim the following:\smallskip\smallskip

\nid \textbf{Claim.} \emph{For each positive integer $i$, there exists a positive integer $N_i \geq N$ such that for all $n \geq 8N_i$, there exists disjoint $x$--cycles $C_n^{(1)},\dots,C_n^{(i)}$ in $Z_n$ with respective lengths $k_n^{(1)},\dots,k_n^{(i)}$ such that $k_n^{(j)} \geq n/8$ for all $1 \leq j \leq i$.}\smallskip\smallskip

\nid That this claim implies the desired contradiction is clear. Indeed, if the claim holds, we have
\[ \frac{ni}{8} \leq \sum_{j=1}^i k_n^{(j)} \leq [\Gamma:\Delta_n] \]
for all positive integers $i$ and all $n \geq N_i$. Taking $i > 8M$ yields an immediate contradiction of (\ref{LinearBound}). Thus, we are reduced to proving the claim.

\begin{proof}[Proof of Claim]
For the base case $i=1$, we can take $N_1=N$ and $m=n$ in the above argument and thus produce an $x$--cycle of length $k_n^{(1)}$ with $n \leq k_n^{(1)}$ for any $n \geq N_1$. Proceeding by induction on $i$, we assuming the claim holds for $i$. Specifically, there exists $N_i \geq N$ such that for all $n \geq 8N_i$, there exists disjoint $x$--cycles $C_n^{(1)},\dots,C_n^{(i)}$ in $Z_n$ with lengths $k_n^{(j)} \geq n/8$. By increasing $N_i$ to some $N_{i+1}$, we need to produce a new $x$--cycle $C_n^{(i+1)}$ in $Z_n$ of length $k_n^{(i+1)} \geq n/8$ for all $n \geq 8N_{i+1}$. For this, set
\[ \ell_{n,m} = \lcm(1,\dots,m)\prod_{j=1}^i k_n^{(j)}. \]
By construction, the lift of the closed curve associated to $x^{\ell_{n,m}}$ to each cycle $C_n^{(j)}$ is closed. Consequently, any lift of the curve associated to $x^{\ell_{n,m}}$ that is not closed must necessarily reside on an $x$--cycle that is disjoint from the previous $i$ cycles $C_n^{(1)},\dots, C_n^{(i)}$. In addition, we must ensure that this new $x$--cycle has length at least $n/8$. To guarantee that the curve associated to $x^{\ell_{n,m}}$ has a lift that is not closed, it is sufficient to have the inequality
\begin{equation}\label{NonClosedLift}
\ell_{n,m} \leq \lcm(1,\dots,n).
\end{equation}
In addition, if $m\geq n/8$, then the length of $x$--cycle containing this lift must be at least $n/8$. We focus first on arranging (\ref{NonClosedLift}). For this, since $k_n^{(j)} \leq Mn$ for all $j$, (\ref{NonClosedLift}) holds if
\[ (Mn)^i\lcm(1,\dots,m) \leq \lcm(1,\dots,n). \]
This, in turn, is equivalent to
\[ \log(\lcm(1,\dots,m)) \leq \log(\lcm(1,\dots,n)) - i\log(Mn). \]
Set $N_{i+1}$ to be the smallest positive integer such that
\[ \frac{n}{8} - i\log(Mn) > 0 \]
for all $n \geq 8N_{i+1}$. Taking $n>8N_{i+1}$ and $n/8 \leq m \leq n/4$, we see that
\begin{align*}
\log(\lcm(1,\dots,m)) &\leq \frac{3m}{2} \\
&\leq \frac{3n}{8} \\
&\leq \frac{3n}{8} + \pr{\frac{n}{8}-i\log(Mn)} \\
&= \frac{n}{2} - i\log(Mn) \\
&\leq \log(\lcm(1,\dots,n)) - i\log(Mn).
\end{align*}
In particular, we produce a new $x$--cycle $C_n^{(i+1)}$ of length $k_n^{(i+1)}\geq n/8$ for all $n \geq N_{i+1}$.
\end{proof}

\nid Having proven the claim, our proof of Theorem \ref{toughlowerbound} is complete.
\end{proof}

\nid Just as in Theorem \ref{basiclowerbound}, Theorem \ref{toughlowerbound} can be extended to any finitely generated group that contains a nonabelian free subgroup.

\begin{cor}
Let $\Gamma$ be a finitely generated group that contains a nonabelian free subgroup. Then
\[ \max \D_{\Gamma,X}(n) \npreceq \log(n). \]
\end{cor}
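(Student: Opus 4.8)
The plan is to reduce to the free case, Theorem~\ref{toughlowerbound}, exactly as Theorem~\ref{basiclowerbound} was deduced from Proposition~\ref{freelowerbound}: I would first record a subgroup monotonicity statement for $\max\D$ --- the analogue of Lemma~\ref{DivisibilityAsymptoticLemma}(b) --- and then argue by contradiction. Given a finitely generated $\Gamma$ with generating set $X$ that contains a nonabelian free subgroup, note that every nonabelian free group contains a copy of $F_2$, so one may fix a subgroup $\Delta \leq \Gamma$ with $\Delta \cong F_2$ and a free generating set $Y$ of $\Delta$.

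The key claim would be that
\[ \max\D_{\Delta,Y} \preceq \max\D_{\Gamma,X}. \]
To prove this, take $\gamma \in \Delta^\bullet$ and pick a finite index subgroup $\Lambda \leq \Gamma$ with $\gamma \notin \Lambda$ and $[\Gamma:\Lambda] = \D_\Gamma(\gamma)$. Then $\Lambda \cap \Delta$ has finite index in $\Delta$, does not contain $\gamma$, and the natural injection $\Delta/(\Lambda \cap \Delta) \hookrightarrow \Gamma/\Lambda$ of coset spaces yields $[\Delta:\Lambda \cap \Delta] \leq [\Gamma:\Lambda]$, whence $\D_\Delta(\gamma) \leq \D_\Gamma(\gamma)$. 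Since each element of $Y$ is a word of length at most some constant $C \geq 1$ in $X^{\pm 1}$, one has $\norm{\gamma}_X \leq C\norm{\gamma}_Y$ for all $\gamma \in \Delta$, so $B_{\Delta,Y}^\bullet(n) \subset B_{\Gamma,X}^\bullet(Cn)$. Combining the two estimates gives
\[ \max\D_{\Delta,Y}(n) \leq \max\D_{\Gamma,X}(Cn), \]
which is precisely the asserted comparison.

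With the claim in hand, I would finish by contradiction: if $\max\D_{\Gamma,X}(n) \preceq \log(n)$, then by the claim and the transitivity of $\preceq$ we would get $\max\D_{\Delta,Y}(n) \preceq \log(n)$ with $\Delta \cong F_2$, contradicting Theorem~\ref{toughlowerbound}. Hence $\max\D_{\Gamma,X}(n) \npreceq \log(n)$.

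I do not expect any genuine obstacle here beyond Theorem~\ref{toughlowerbound} itself; the only point needing care is the subgroup monotonicity for $\max\D$, which is the same coset-counting argument underlying Lemma~\ref{DivisibilityAsymptoticLemma}(b) together with the bilipschitz comparison of word metrics under a change of generating set. Unlike Lemma~\ref{DivisibilityAsymptoticLemma}(c), no finite-index hypothesis is required: restricting a separating subgroup to $\Delta$ can only decrease its index, so $\D$, and hence $\max\D$, is monotone under passage to a finitely generated subgroup without any further assumption.
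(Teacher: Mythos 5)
Your proposal is correct and follows exactly the reduction the paper has in mind: the paper states the corollary without proof, remarking only that it is obtained ``just as in Theorem~\ref{basiclowerbound},'' which is precisely your strategy of establishing subgroup monotonicity for $\max\D$ (the non-normal analogue of Lemma~\ref{DivisibilityAsymptoticLemma}(b)) and then invoking Theorem~\ref{toughlowerbound} for an embedded $F_2$. Your coset-counting verification that $\D_\Delta(\gamma)\leq\D_\Gamma(\gamma)$ together with the word-metric comparison $B_{\Delta,Y}^\bullet(n)\subset B_{\Gamma,X}^\bullet(Cn)$ is the right content, and the remark that no finite-index hypothesis is needed is accurate.
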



\noindent Department of Mathematics \\
University of Chicago \\
Chicago, IL 60637, USA \\
email: {\tt khalid@math.uchicago.edu}, {\tt dmcreyn@math.uchicago.edu}\\


\end{document}